\newtheorem{theorem}{Theorem}
\newtheorem{proposition}[theorem]{Proposition}
\newtheorem{lemma}[theorem]{Lemma}
\newtheorem{corollary}[theorem]{Corollary}
\theoremstyle{definition}
\newtheorem{definition}[theorem]{Definition}
\theoremstyle{remark}
\newtheorem{remark}[theorem]{Remark}
\newtheorem{example}[theorem]{Example}
\def\R{\mathbb{R}}
\def\N{\mathbb{N}}
\def\inradius{R_\Omega}
\def \e{\varepsilon} 
\def \Dinf{\Delta _\infty} 
\def \L{\Lambda}
\def \l {\lambda} 
\def \lp {\lambda_{\Omega,p}}
\def \Lp {\Lambda_{\Omega,p}}
\def\Vspace{\mathcal{V}}
\newcommand{\Oma}[1]{\left\{d > #1\right\}}
\newcommand{\Omc}[1]{\left\{d \geq #1\right\}}
\DeclareMathOperator{\dist}{dist}
\DeclareMathOperator{\interior}{int}
\begin{document}

\title[Bernoulli free boundary problem]%
{Bernoulli free boundary problem\\ for the infinity Laplacian}%
\author[G.~Crasta, I.~Fragal\`a]{Graziano Crasta,  Ilaria Fragal\`a}
\address[Graziano Crasta]{Dipartimento di Matematica ``G.\ Castelnuovo'', Univ.\ di Roma I\\
P.le A.\ Moro 2 -- 00185 Roma (Italy)}
\email{crasta@mat.uniroma1.it}

\address[Ilaria Fragal\`a]{
Dipartimento di Matematica, Politecnico\\
Piazza Leonardo da Vinci, 32 --20133 Milano (Italy)
}
\email{ilaria.fragala@polimi.it}

\keywords{Bernoulli problem, infinity Laplacian, capacitary potential, distance function.}
\subjclass[2010]{Primary 49K20, Secondary 35J70, 35J40.  }

\date{May 8, 2019}

\begin{abstract}
We study the interior Bernoulli free boundary problem for the infinity Laplacian.  Our results cover existence, uniqueness, and characterization of solutions (above a threshold representing  the ``infinity Bernoulli constant''), their regularity, and their relationship with the solutions to the interior Bernoulli problem for the $p$-Laplacian. 
\end{abstract}

\maketitle

\section{Introduction}
This paper concerns  the following interior Bernoulli-type problem: 
\begin{equation}\label{f:P}
\begin{cases}
\Dinf u = 0 & \text{ in } \Omega^+(u) := \{x\in\Omega:\ u(x) > 0\},
\\
u = 1 & \text{ on } \partial \Omega,
\\ 
|\nabla u| = \lambda    
& \text{ on } F(u) := \partial\Omega^+(u) \cap \Omega\, , 
\end{cases} 
\tag*{$(P)_\lambda$}  
\end{equation}

where $\Omega$ is an open bounded connected domain  in  $\R ^n $ ($n\geq 2$),  
and $\Delta _\infty$ is the infinity Laplacian, defined by
$$\Delta _\infty u := \nabla ^ 2 u \nabla u \cdot \nabla u\qquad \forall u \in C ^ 2 (\Omega)\,.$$ 

Before presenting our results, we wish to put them into context by saying few words on related literature.

\subsection{Bernoulli problem for the $p$-Laplacian.}\label{s:ip}  The analogue of problem \ref{f:P} for the $p$-Laplacian,
namely
\[
\begin{cases}
\Delta_p u = 0
& \text{ in } \Omega^+(u)\,,
\\
u = 1 & \text{ on } \partial \Omega,
\\ 
|\nabla u| = \lambda    
& \text{ on } F(u)\,,
\end{cases}
\]
corresponds to the classical Bernoulli problem when $p=2$, and by now it has been widely studied also in the nonlinear case of an arbitrary $p >1$.  
It is motivated by several physical and industrial applications, 
including
fluid dynamics, optimal insulation, and electro-chemical machining (see \cite{FR97} for a more precise description).  
The main questions are the existence and uniqueness of solutions, the
geometric properties of the free boundary $F (u)$, and especially its regularity (for an overview  on these topics, we address to \cite{CaffSalsa,FS15}).

When $\Omega$ is convex and regular, it was proved by Henrot and Shahgholian  that there exists a positive constant 
$\lp$, called  the \textit{Bernoulli constant for the $p$-Laplacian}, such that  
the interior $p$-Bernoulli problem admits a non-constant solution 
if and only if $\lambda \geq \lp$; this solution is in general not unique, it 
has convex level sets, and its free boundary $F(u)$ is of class $C^{2,\alpha}$ (see \cite{HeSh2, CT02}).   

When $\Omega$ is an arbitrary domain, not necessarily convex, one way of finding solutions is to use the approach 
introduced by Alt and Caffarelli
in the linear case $p=2$
in the seminal work \cite{AlCa}. It 
amounts to minimizing the integral functional
\begin{equation}\label{f:Jpi}
J ^{\lambda } _p  (u) := \frac{1}{p} \int_\Omega \left(\frac{|\nabla u|}{\lambda} \right)^p  + \frac{p-1}{p} \,  \big | \{ u >0 \}   \big | 
\end{equation}
over the space  $W^{1,p}_1(\Omega) := 1 + W^{1,p}_0(\Omega)$.

This minimization problem admits a non-constant solution if and only if $\l \geq \Lp$, where 
$\Lp$ is a positive constant satisfying $\Lp \geq \lp$ \cite{DaKa2010}. 
 A non-constant minimizer of $J ^{\lambda } _p$ over  $W ^ {1,p}_1 (\Omega)$
 solves the $p$-Bernoulli problem provided the free boundary condition $|\nabla u| = \lambda$ is understood in a suitable weak sense
(cf.~\eqref{f:weakFB}). 
 The free boundary $F(u)$ turns out to be a locally analytic hyper-surface, except for a $\mathcal H ^{n-1}$-negligible singular set 
 (in the vast literature about the free boundary regularity, we limit ourselves to quote  as main contributions \cite{AlCa, CJK04, DSJ09} for the case $p = 2$ 
and \cite{DanPet, DanPet2, P08} for general $p$).

\smallskip 
\subsection{Free boundary problems for the infinity Laplacian.}
This highly nonlinear and strongly degenerated operator was
discovered by Aronsson in the sixties \cite{Aro}. However, the study of boundary value problems for the infinity Laplacian started only in the early nineties, 
with the advent of viscosity solutions theory. 
Bhattacharya, DiBenedetto and Manfredi were the first to consider
the Dirichlet problem for infinity harmonic functions and  to prove the existence of a solution in the viscosity sense \cite{BDM}; shortly afterwards, a fundamental contribution came  
by Jensen \cite{Jen}, who proved the validity of the comparison principle for the infinity Laplacian, yielding the uniqueness of solutions (see also \cite{ArCrJu, BaJeWa}).  
The last decade has seen a renewed and increasing interest around the infinity Laplacian, also due to its connections with differential games. 
With no attempt of completeness, among the topics under investigation in this growing field let us mention: 
inhomogeneous equations \cite{BhMo, LuWang}, regularity of solutions \cite{CFe, EvSav, EvSm, Lind, SWZ},  
ground states \cite{JLM,HSY,Yu,CF7}, overdetermined problems \cite{CFc, CFd, CFf}, tug-of-war games \cite{KoSe, PSSW}. 
In this scenario, the study of free boundary problems involving the infinity Laplacian seems to be rather at its early stage. 
To the best of our knowledge, only the following exterior version,
in the complement $\omega := \R^n\setminus\overline{\Omega}$ of an open bounded
convex set $\Omega$,
of Bernoulli problem has been considered in the literature (see \cite{ManPetSha}):
\[
\begin{cases}
\Dinf u = 0 & \text{ in } \omega^+(u):= \{x\in\omega:\ u(x) > 0\},
\\
u = 1 & \text{ on } \partial \omega,
\\ 
|\nabla u| = a(x)
& \text{ on } F(u)\,. 
\end{cases} 
\]
In particular, when $\Omega$ is a regular convex set  and $a (x) \equiv \lambda$, the situation looks relatively simple: a unique explicit solution exists, given by $1 -  \frac{1}{\lambda}\, \dist(x, \partial\omega)$. 
It satisfies the condition $|\nabla u | = \lambda$  in the classical sense 
along its free boundary, which 
in this case is of class $C^1$.
Further,  such solution can be identified with the pointwise limit, as $p \to + \infty$, of the unique solutions $u _p$ to the 
exterior Bernoulli problem for the $p$-Laplacian. 

On the variational side, let us mention that the asymptotics as $p \to + \infty$ of integral energies  associated with the exterior $p$-Bernoulli problem
(loosely speaking, functionals of the type  \eqref{f:Jpi} with $\Omega$ replaced by its complement)
has been studied in \cite{KawSha}. 
In a somewhat close spirit, the limiting behaviour as $p \to + \infty$ 
of the solutions of the minimization problems 
for the $p$-Dirichlet integral with a positive boundary datum and a constraint on the volume of the support, has been studied in \cite{RossiTeix}. 
Still, in the theme of free boundary problems for the infinity Laplacian, see also \cite{RUT, RossiWang,TU17}.  

\subsection{Notion of solution.} A delicate point before starting the analysis of problem \ref{f:P} is to establish what is meant by a solution. 
Clearly the PDE has to be understood in the viscosity sense. 
Going further we point out that, 
contrarily to the case of the exterior problem mentioned above,  for solutions to problem~\ref{f:P} the free boundary need not be globally $C^1$. Consequently,  a solution is not expected to be  differentiable up to the boundary (see \cite{Hong, Hong2}), so that also the free boundary condition cannot be interpreted in a pointwise, classical way. Thus, even at the boundary, a viscosity  interpretation seems to be
the most convenient one in order to manage both existence and uniqueness questions.  
More precisely, throughout the paper 
we interpret solutions to~\ref{f:P}
to mean viscosity solutions defined in the next definition,
introduced by De Silva \cite[Definitions~2.2 and~2.3]{DeSilva2011}, 
and has been adopted in several subsequent works 
(see for instance \cite{DFS17, LR18, LT15}). 

If $u,v\colon\Omega\to\R$ are two functions and
$x\in\Omega$, by
$u \prec_x v$
we mean that $u(x) = v(x)$ and $u(y) \leq v(y)$ 
in a neighborhood of $x$.
Moreover, in the following definition
for any test function $\varphi$ of class $C^2$, 
we set $\varphi^+ := \max \{ \varphi, 0 \}$. 

\begin{definition}\label{d:desilva}
	A non-negative function $u\in C(\overline{\Omega})$ is a viscosity solution to \ref{f:P} if
	
	\smallskip
	\begin{itemize}
		\item[(a)]
		$u$ is infinity harmonic at every $x \in \Omega ^+ (u)$, i.e., for any test function $\varphi$ of class $C^2$, 
		\begin{itemize}
\item[(a1)] if $u\prec_x \varphi$, then $-\Delta_\infty\varphi(x)\leq 0$; 

\smallskip
\item[(a2)] if $\varphi \prec_x u$, then $-\Delta_\infty\varphi(x)\geq 0$;
\end{itemize}

\smallskip
		\item[(b)] 
		the Dirichlet condition $u = 1$ holds pointwise on $\partial\Omega$;
		
		\smallskip
		\item[(c)]
		the free boundary condition holds at every $y \in F (u)$, i.e., for any test function $\varphi$ of class $C^2$ with $\nabla \varphi (y) \neq 0$, 
		\begin{itemize}
			\item[(c1)]
			if  $\varphi^+ \prec_{y} u$, 
			then $|\nabla\varphi(y)| \leq \lambda$;
			
			\smallskip
			\item[(c2)]
			if $u \prec_{y} \varphi^+$, 
			then $|\nabla\varphi(y)| \geq \lambda$.
		\end{itemize} 
	\end{itemize}
\end{definition}
It is clear from the definition that  $u=0$ on 
$\Omega\setminus\Omega^+(u)$, hence, in particular, on $F(u)$.

We point out that a solution in the sense of Definition~\ref{d:desilva},
is also a solution in the sense 
proposed by Caffarelli in
\cite[Definition~1]{Caff-H1}  (see also \cite{Caff-H2, Caff-H3}). 
The converse is a priori not true, because a touching ball as in Caffarelli's definition does not exist necessarily at all points of the free boundary. 
Some of our results (e.g., Proposition~\ref{p:gradbound} and 
Proposition~\ref{c:gradbound}) remain true if solutions are intended in the sense of \cite{Caff-H1}. 
However, Definition~\ref{d:desilva}  \`a la De Silva seems to be the one which allow us to deal in an optimal way with the existence question 
(in particular, in the proof of Theorem \ref{t:B} (b)).

\subsection{Synopsis of the results.} We carry over a detailed analysis of problem \ref{f:P}, 
which covers existence, uniqueness, and characterization of solutions, their regularity, and their relationship with the solutions to the interior Bernoulli problem for the $p$-Laplacian. We postpone  to a companion paper \cite{CF9} the study of the variational problem which is naturally associated with~\ref{f:P}, namely the minimization of the supremal functional 
\[
J _\lambda (u) := \|\nabla u\|_\infty + \lambda | \{ u > 0 \} |
\]  
over the space
of functions $u\in C(\overline{\Omega}) \cap W^{1,\infty}(\Omega)$  
which are equal to $1$ on $\partial \Omega$.

\smallskip
$\bullet$ {\it Existence.} By analogy with the case of the $p$-Laplacian, we define the $\infty$-Bernoulli constant of $\Omega$ as 
\begin{equation}\label{f:l}
\lambda _{\Omega, \infty}  := \inf  \Big \{ \lambda >0 :\  \text{ \ref{f:P} admits a non-constant solution }\Big \} \,.
 \end{equation} 
Then we identify $\lambda _{\Omega, \infty}$  with the reciprocal of the inradius $\inradius$ of $\Omega$. Indeed, for $\l < 1/\inradius$, problem \ref{f:P} does not admit any non-constant solution (Theorem \ref{t:B} (b)). The proof is based on a gradient estimate obtained via the gradient flow for infinity harmonic functions (Proposition~\ref{p:gradbound}).  
Conversely, for $\l \geq 1/\inradius$, we get existence. 
More precisely, it is convenient to distinguish between genuine
and non-genuine solutions, according to whether the set $\{ u = 0 \}$ has positive Lebesgue measure or not.  
For any $\l \geq 1/\inradius$, it is easily seen that problem~\ref{f:P} admits 
many non-genuine solutions, given by the infinity harmonic potentials
(see Definition~\ref{d:inftypot} below) 
of suitable compact subsets with empty interior contained in  the set of points $x \in \Omega$ with 
$\dist(x, \partial \Omega)\geq 1/\l $ (Proposition~\ref{p:trivial}). 
So the interesting feature is the existence of a genuine solution: 
if $\lambda > 1/\inradius$,
we show that it is given precisely by the infinity harmonic potential $w_{1/\l}$ of the set $\overline{\Omega_{1/\l}}$, being $\Omega_{1/\l}$ the set of points 
$x \in \Omega$ such that $\dist(x, \partial \Omega) > 1/\l$ (Theorem~\ref{t:B} (a)). This is obtained by constructing suitable upper and lower bounds for $w_{1/\l}$, and 
taking advantage of the simple behaviour of infinity harmonic potentials along rays of the distance function (see Section~\ref{ss:potentials}).

\smallskip
$\bullet$ {\it Uniqueness.} 
For $\l > 1/\inradius$, we obtain uniqueness of genuine solutions under two assumptions on the set $\Omega _{1/\l}$: connectedness and  ``open regularity" (Theorem \ref{t:unique}); moreover, we show that these assumptions are sharp (Examples \ref{e:nonconn} and \ref{e:nonreg}). 
It turns out that they are satisfied for example when $\Omega$ is convex. Remarkably, 
such uniqueness result on convex domains distinguishes the case of the $\infty$-Laplacian from the case of the $p$-Laplacian, when 
we have multiplicity of solutions also in case of the ball.

\smallskip
$\bullet$ {\it Characterization of solutions.} For $\l \geq 1/\inradius$, we show that $u$ is a solution to \ref{f:P} if and only if it is the 
infinity harmonic potential of  a set $K$ belonging to a suitable family of compact subsets of $\overline{\Omega _{1/\l}}$. 
This result (Theorem~\ref{t:solPl2}) gives a complete picture of 
solutions to~\ref{f:P} in case $\Omega$ is an arbitrary domain. 

\smallskip
$\bullet$ {\it Regularity.}  As a by-product of the results described so far, combined with well-known facts about the regularity of 
infinity harmonic functions, 
we obtain that, for $\l \geq 1/\inradius$,  any genuine solution is everywhere differentiable in $\Omega^+(u)$
(and $C^{1, \alpha}$ in dimension $n=2$).
Furthermore, the free boundary essentially shares the same regularity
properties of the level set $\{\dist(x, \partial \Omega) = 1/\lambda\}$ of the distance function.
More precisely, if we denote by $\Sigma(\Omega) $ the cut locus of $\Omega$
(i.e., the closure of the set of points where the distance from $\partial \Omega$ is not differentiable), then $F(u) \setminus \Sigma(\Omega) $ is locally $C^{1,1}$.
As a particular case, if $\lambda > 1/ \dist(\partial\Omega, \Sigma(\Omega) )$,
then $F(u)$ is of class $C^{1,1}$ and, if in addition
$\partial\Omega$ is of class $C^{k,\alpha}$ for some $k\geq 2$,
then $F(u)$ is of class $C^{k,\alpha}$
(see e.g.\ \cite[Theorem~6.10]{CMf}).

\smallskip
$\bullet$ {\it Relationship with the $p$-Bernoulli problem.} We show that, if $\Omega$ is convex and regular, both the $p$-Bernoulli constants $\l _{\Omega, p}$ and $\L _{\Omega, p}$ 
defined as in Section \ref{s:ip} above converge to $\l _{\Omega, \infty} = 1/R_\Omega$ in the limit as $p \to + \infty$ (Corollary \ref{c:convl}).
Moreover, if $u _p$ are solutions to the interior $p$-Bernoulli problem, we prove that they converge uniformly to the solution 
to problem \ref{f:P} provided
such solution is unique, and provided $u _p$ are {\it variational} solutions, namely they are minimizers 
of functionals~\eqref{f:Jpi} over $W^{1,p}_1 (\Omega)$ (Theorem~\ref{t:convp}). 
\subsection{Open problems} Let us conclude this Introduction by addressing some among the many interesting questions related to the results contained in this paper: 
\begin{itemize}
\item[(i)] Is it possible to extend at least some of our results to the case of  non-constant boundary data?
\item[(ii)] Does the unique solution to problem~\ref{f:P} on a convex domain have convex level sets?
\item[(iii)] In cases when there are multiple genuine solutions, does it exist a {\it minimal} genuine solution, and how can it be characterized?
\item[(iv)] When the solution 
to problem \ref{f:P} is not unique, is it still true that 
the variational solutions $u_p$ to the interior $p$-Bernoulli problem converge in the limit as $p \to + \infty$, and what is their limit?
\end{itemize}

\bigskip

\section{Some preliminary results }

In this section we collect some material
which will be useful throughout the paper. 
To be self-contained, we start by giving a quick recall of
some basic facts about infinity harmonic functions,
for which we refer to \cite{ArCrJu,Cran,CEG}.  

Then we establish some general properties of (non-constant) solutions to \ref{f:P}
and  of infinity harmonic potentials, which will play a crucial role in the sequel. 

\smallskip
Let us firstly introduce some notation. 
We shall write for brevity
$d(x) := \dist(x, \partial\Omega)$, $x\in\overline{\Omega}$. 
Moreover, we denote by 
$\inradius := \max_{\overline{\Omega}} d$ the inradius of $\Omega$, 
and for any $r \in [0, \inradius]$, we set
\begin{gather*}
\Omega_r = \Oma{r} := \{x\in\Omega:\ d(x) > r\}\,,
 \\
\Omc{r} := \{x\in\Omega:\ d(x) \geq r\}\,,
\\
D_r:= \Omega \setminus \overline {\Omega _r}\,.
\end{gather*}
For every $x\in\overline{\Omega}$ we denote by
\begin{equation}\label{f:Pi}
\Pi _{\partial \Omega} (x) := \big \{ z \in \partial \Omega\, : \, d (x) = |z-x| \big \}
\end{equation}
the set of the closest points (or projections) of $x$ on $\partial\Omega$.

\subsection{About infinity harmonic functions.} 
A function $u\in C (\Omega)$ is called \textit{infinity subharmonic} (resp.\ \textit{infinity superharmonic}) if it satisfies 
condition (a1) (resp. (a2)) in Definition~\ref{d:desilva}. It is called infinity harmonic if it is both infinity subharmonic and superharmonic. 

An infinity harmonic function on $\Omega$ is differentiable at every point $x \in \Omega$ in any space dimension, and of class $C ^ {1, \alpha}(\Omega)$ 
in dimension $n =2$ \cite{EvSm, EvSav, Sav}. 

The following conditions are equivalent:

\begin{itemize}
\item[(i)]  $u$ is infinity harmonic in $\Omega$; 

\smallskip
\item[(ii)]  
$u$ has the {\it absolutely minimizing Lipschitz} property,
which means that $u$ is locally Lipschitz in $\Omega$
and, for every open set $\omega \Subset \Omega$
and every $v \in C (\overline \omega)$, with $v = u$ on $\partial \omega$,
$\|\nabla u\|_{L^\infty(\omega)} \leq \|\nabla v\|_{L^\infty(\omega)}$.
The space of functions $u$ having this property
is denoted by $AML ( \Omega)$;

\smallskip
\item[(iii)] the functions $w=u$ and $w=-u$ {\it enjoy comparison with cones from above} in $\Omega$, which means that, 
for every open set $\omega \Subset \Omega$ and for every 
$a, b \in \R$ and $x_0 \in \R ^n$, it holds
\[
w (x) \leq C (x):= a + b |x-x_0|\,, \  \forall x \in \partial (\omega \setminus \{ x_0\}) 
\quad \Longrightarrow \quad 
w (x) \leq C ( x)\,, \ \forall x \in \omega \,.
\]
\end{itemize}

Let $u$ be infinity harmonic in $\Omega$, and let
$\overline{B}_r(x) \subset\Omega$.
Then 
\begin{equation}
\label{f:maxmin}
\max_{y\in \overline{B}_r(x)} u(y) = \max_{y\in \partial{B}_r(x)} u(y),
\qquad
\min_{y\in \overline{B}_r(x)} u(y) = \min_{y\in \partial{B}_r(x)} u(y),
\end{equation}
and the following relations hold:
\begin{gather}
|\nabla u(x)| \leq \max_{y\in \overline{B}_r(x) } \frac{u(y)-u(x)}{r}
= \max_{y\in \partial{B}_r(x) } \frac{u(y)-u(x)}{r}, \label{f:cr1}\\
|\nabla u(x)| \leq -\min_{y\in \overline{B}_r(x) } \frac{u(y)-u(x)}{r}
= -\min_{y\in \partial{B}_r(x) } \frac{u(y)-u(x)}{r} \label{f:cr2}
\end{gather} 
(see \cite[Lemma~4.3]{Cran}).
Moreover, if the maximum and minimum at the right--hand side of \eqref{f:cr1}, \eqref{f:cr2}
are attained respectively at $p,q\in\partial B_r(x)$, i.e.\ if
\[
p,q\in\partial B_r(x):
\quad
u(p) = \max_{y\in \partial{B}_r(x)} u(y),
\quad
u(q) = \min_{y\in \partial{B}_r(x)} u(y),
\]
then the following
\textsl{increasing slope estimates} hold:
\begin{equation}
\label{f:ise}
|\nabla u(x)| \leq |\nabla u(p)|,
\quad
|\nabla u(x)| \leq |\nabla u(q)|
\end{equation}
(see \cite[Proposition~6.2]{Cran}).

\subsection{Properties of solutions to \ref{f:P}}

Observe that,
if $u$ is a strictly positive solution to~\ref{f:P},
then by uniqueness $u\equiv 1$.
Hence, any non-constant solution to~\ref{f:P} must vanish
at some point of $\Omega$, i.e., 
$F(u)\neq\emptyset$.

\begin{proposition}[gradient estimate]\label{p:gradbound}
	Let $u\in C(\overline{\Omega})$ be a solution
	to~\ref{f:P}.
	Then $|\nabla u(x)| \leq \lambda$ for every $x\in \Omega^+(u)$.
\end{proposition}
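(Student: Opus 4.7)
Fix $x_0 \in \Omega^+(u)$ and set $L := |\nabla u(x_0)|$; we may assume $L > 0$, since otherwise there is nothing to prove. Because $u$ is differentiable at $x_0$ and (by comparison with cones) satisfies $u \le 1$ on $\overline\Omega$, the assumption $L>0$ forces $u(x_0)<1$. The strategy is to build a steepest-descent chain from $x_0$ ending at a point $y\in F(u)$ along which $|\nabla u|$ stays bounded below by $L$, and then to test the viscosity free-boundary condition (c1) at a neighbouring free-boundary point with a cone-shaped function of slope $L$.

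\emph{Descent chain.} I define inductively a sequence $(x_k)\subset\Omega^+(u)$ starting from $x_0$: given $x_k$, set $r_k := \tfrac12\,\mathrm{dist}(x_k,\partial\Omega^+(u)) > 0$ and let $x_{k+1}$ be a minimum point of $u$ on $\partial B_{r_k}(x_k)$. The increasing-slope estimate \eqref{f:ise} gives $|\nabla u(x_{k+1})|\geq|\nabla u(x_k)|\geq L$, while \eqref{f:cr2} yields
\[
u(x_{k+1}) \leq u(x_k) - r_k |\nabla u(x_k)| \leq u(x_k) - r_k L.
\]
Telescoping and using $u\geq 0$ I obtain $\sum_k r_k \leq u(x_0)/L<\infty$, so $(x_k)$ is Cauchy with limit $y$, and $r_k\to 0$ forces $\mathrm{dist}(x_k,\partial\Omega^+(u))\to 0$, hence $y\in \partial\Omega^+(u)\subseteq \partial\Omega\cup F(u)$. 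Since $u(x_k)\leq u(x_0)<1$ while $u\equiv 1$ on $\partial\Omega$, the possibility $y\in\partial\Omega$ is ruled out, and thus $y\in F(u)$.

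\emph{Cone lower bound and free-boundary test.} For $k$ large, $d_k:=\mathrm{dist}(x_k,F(u))\leq|x_k-y|\to 0$, whereas $\mathrm{dist}(x_k,\partial\Omega)\to\mathrm{dist}(y,\partial\Omega)>0$; hence $d_k<\mathrm{dist}(x_k,\partial\Omega)$, so $B_{d_k}(x_k)\subset\Omega^+(u)$ and there exists $y_k\in F(u)\cap\partial B_{d_k}(x_k)$. Since $u\geq 0$ with $u(y_k)=0$, the minimum of $u$ on $\partial B_{d_k}(x_k)$ is $0$, and \eqref{f:cr2} gives $u(x_k)\geq L\,d_k$. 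Comparison with cones from above applied to $-u$ with vertex $x_k$ on the ball $B_{d_k}(x_k)$ then yields
\[
u(z) \geq u(x_k)\Bigl(1 - \tfrac{|z-x_k|}{d_k}\Bigr) \geq L\bigl(d_k - |z-x_k|\bigr), \qquad z\in\overline{B}_{d_k}(x_k).
\]
Now set $\varphi(z):=L(d_k-|z-x_k|)$, which is smooth near $y_k\neq x_k$ with $|\nabla\varphi(y_k)|=L$. The displayed inequality, combined with $\varphi^+=0\leq u$ outside $B_{d_k}(x_k)$, gives $\varphi^+\leq u$ globally with equality at $y_k$, i.e.\ $\varphi^+\prec_{y_k} u$. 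Condition (c1) of Definition~\ref{d:desilva} therefore forces $L=|\nabla\varphi(y_k)|\leq\lambda$, completing the proof.

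The main obstacle I foresee is the first step: one must ensure that the descent chain remains inside $\Omega^+(u)$ and accumulates on the \emph{free} boundary rather than on the Dirichlet boundary $\partial\Omega$. This hinges on the interplay between the slope estimate \eqref{f:cr2}, the increasing-slope estimate \eqref{f:ise}, and the strict bound $u(x_0)<1$ that follows from differentiability at $x_0$ together with the global bound $u\leq 1$. Once the chain is controlled, the concluding cone test at $y_k$ is essentially a translation of the gradient information into the form required by Definition~\ref{d:desilva}(c1).
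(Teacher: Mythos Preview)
Your proof is correct and follows essentially the same strategy as the paper's: a steepest-descent chain driven by the increasing-slope estimate \eqref{f:ise} and \eqref{f:cr2}, followed by a cone lower bound and the viscosity test (c1) at a free-boundary point. The differences are cosmetic---the paper uses a \emph{fixed} step $r<\rho$ (with $\rho$ chosen so that the compact sublevel $\{u\le u(x_0)\}\subset\Omega_\rho$) and hence a \emph{finite} chain terminating when the ball meets $F(u)$, whereas you run an infinite Cauchy chain with variable step $r_k=\tfrac12\dist(x_k,\partial\Omega^+(u))$ and rule out accumulation on $\partial\Omega$ via $u(x_k)\le u(x_0)<1$; at the end the paper linearizes the cone to produce a smooth $\varphi$, while you use the cone itself (smooth near $y_k\neq x_k$) directly as test function.
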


\begin{proof}
If $u$ is a constant solution then the result is trivial.
Let $u$ be a non-constant solution.
	Let $x_0\in\Omega^+(u)$ and let us prove that $|\nabla u(x_0)| \leq \lambda$.
	Since the statement is trivial if $\nabla u (x_0) = 0$, let us assume that $\nabla u (x_0) \neq 0$. 
	In this case, we claim that there exists a finite family $x_0, x_1, \ldots, x_N$ of points with the following
	properties:
	\begin{gather}
		x_0, \ldots, x_{N-1} \in \{ u \leq u (x_0)  \} \cap \Omega ^+ (u),
		\quad 
		x_N \in F(u),
		\label{f:fpt1}
		\\
		|\nabla u(x_j)| \geq |\nabla u(x_{j-1})|
		\ \forall j = 1, \ldots, N-1,
		\
		u(x_{N-1}) \geq \dist(x_{N-1}, F(u)) |\nabla u(x_{N-1})|.
		\label{f:fpt2}
	\end{gather}

Since $u (x_0) <1$ and $u$ is continuous, the sub-level $\mathcal C:= \{ u \leq u (x_0)  \}$ is a compact  subset of $\Omega$. Hence we can find $\rho >0$ such that $\mathcal C \subset \Omega _\rho$. 
	Then we fix $r \in (0, \rho)$ and we proceed as follows.
	
	Assume we are given $x_{j-1}\in \{ u \leq u (x_0)  \} \cap \Omega ^+ (u)$, and let us construct
the point $x_j$.
	
	If $\overline{B}_r(x_{j-1}) \subset \Omega^+(u)$,
	then we let $x_j\in\overline{B}_r(x_{j-1})$ be such that
	\[
	u(x_j) = \min_{y\in \overline{B}_r(x_{j-1})} u(y).
	\]

	By definition, we have immediately $u(x_j) \leq u(x_{j-1})$, so that $x_j\in \mathcal C \cap \Omega ^ + (u)$.
Moreover, 
since $u$ is infinity-harmonic in $\Omega^+(u)$, 
by \eqref{f:maxmin} and \eqref{f:ise}
it turns out that $x_j\in\partial B_r(x_{j-1})$ and 
	$|\nabla u(x_j)| \geq |\nabla u(x_{j-1})|$.
	
	If $\overline{B}_r(x_{j-1})$ is not contained in $\Omega^+(u)$, 
	by our choice of $r$ we have necessarily 	$\overline{B}_r(x_{j-1})\cap F(u) \neq \emptyset$. (Indeed, 
	since $x _{j-1} \in \mathcal C \subset \Omega _\rho$ and $r \in (0, \rho)$, 
	we have 
	$\overline{B}_r(x_{j-1}) \cap\partial\Omega = \emptyset$.) 
In this case,  we set $N = j$, ending the construction, and we let $x_N\in F(u)$ be a closest point of $x_{N-1}$ to $F(u)$.
	Setting 
$\delta := \dist(x_{N-1}, F(u)) = |x_N - x_{N-1}|$ and taking into account
	$u(x_N) = 0 = \min_{y \in \overline{B}_{\delta}(x_{N-1})} u(y)$,
	by \eqref{f:cr2}
	we obtain 
		$$|\nabla u (x_{N-1})| \leq - \min _{y\in \overline{B}_\delta(x_{N-1})} \frac{u (y) - u (x_{N-1})}{\delta} =  \frac{u (x_{N-1}) }{\delta}\,. $$ 

	It remains to show that our construction always stops in a finite number of steps.  
	Specifically, for every $j = 1, \dots , N -1$, applying again~\eqref{f:cr2},
	we obtain	
\[
|\nabla u (x_{j-1})| \leq - \min _{y\in \overline{B}_r(x_{j-1})} \frac{u (x) - u (x_{j-1})}{r} =  \frac{u (x_{j-1}) - u (x_{j})}{r} ;
\] 
	hence	 
\[
	u(x_j) \leq u(x_{j-1}) - r |\nabla u(x_{j-1})| 
	\leq u(x_{j-1}) - r |\nabla u(x_{0})|\,,
	\]
	 so that in a finite number of steps we arrive at $F (u)$ thanks to the assumption $\nabla u (x_0) \neq 0$.

	\smallskip
	Now, let us consider
	the open ball $ B_\delta(x_{N-1}) \subset \Omega^+(u)$. By comparison with cones \cite[Theorem 3.1]{CEG}, we have 
	\begin{equation}\label{f:cones}
	u(x) \geq 
\varphi(x):=
u(x_{N-1}) \left(1 - \frac{1}{\delta}|x-x_{N-1}|\right) \qquad \forall x \in  B_\delta(x_{N-1})\,.
	\end{equation}
Since $x_N\in\partial B_\delta(x_{N-1})$, by~\eqref{f:cones} we have that
\[
\varphi^+ \prec_{x_N} u\, , \qquad |\nabla\varphi(x_N)|= \frac{u(x_{N-1}) }{ \delta} (\neq 0) \,.
\] 
Then, by applying first Definition~\ref{d:desilva}(c1) and then the inequalities \eqref{f:fpt2}, we finally get	
	\[
	\lambda \geq \frac{u(x_{N-1})}{\delta} \geq 
	|\nabla u(x_{N-1})| \geq
	|\nabla u(x_0)|,
	\]
	and the proof is completed. 
\end{proof}

\begin{proposition}[free boundary location]\label{c:gradbound}
	Let $u\in C(\overline{\Omega})$ be a non-constant solution
	to \ref{f:P}. 
	Then $\dist(F(u), \partial\Omega) \geq \frac{1}{\lambda}$ 
	(or,
	equivalently,
	$\{u = 0\} \subseteq \Omc{\frac{1}{\lambda}}$).
	If, in addition, $\interior\{u=0\}\neq\emptyset$, then $\dist(F(u), \partial\Omega) = \frac{1}{\lambda}$.
\end{proposition}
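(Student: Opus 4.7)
The plan is to prove the two assertions separately, using only Proposition~\ref{p:gradbound} and condition (c2) of Definition~\ref{d:desilva}.

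For the inequality $\dist(F(u),\partial\Omega)\ge 1/\lambda$, I would fix $y\in F(u)$, let $x_0\in\partial\Omega$ realize $|y-x_0|=d(y)$, and parametrize the closed segment from $y$ to $x_0$ by arclength as $\gamma\colon[0,d(y)]\to\overline\Omega$. Setting $s^*:=\sup\{s\in[0,d(y)]:u(\gamma(s))=0\}$, continuity gives $u(\gamma(s^*))=0$ and $\gamma((s^*,d(y)])\subset\Omega^+(u)$. Proposition~\ref{p:gradbound} yields $|\nabla u|\le\lambda$ along this open subsegment, hence $u\circ\gamma$ is $\lambda$-Lipschitz on $[s^*,d(y)]$, and
\[
1 = u(x_0) - u(\gamma(s^*)) \le \lambda\,(d(y) - s^*) \le \lambda\,d(y).
\]

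For the equality under the assumption $\interior\{u=0\}\neq\emptyset$, I would argue by contradiction, assuming $\sigma_0:=\dist(F(u),\partial\Omega)>1/\lambda$. A preliminary observation I would record first is that the trivial extension of $u$ by $0$ outside $\Omega^+(u)$ is infinity subharmonic on the whole of $\Omega$: at any contact point $x\in\{u=0\}$ with $u\prec_x\varphi$, the nonnegativity of $u$ forces $\varphi$ to have a local minimum at $x$, so $\nabla\varphi(x)=0$ and $\Delta_\infty\varphi(x)=0$; this is what will allow comparison across $F(u)$. Since $\{u=0\}$ is compact in $\Omega$ and has nonempty interior, the radius $\rho^*:=\sup\{r>0:\exists z,\,B_r(z)\subseteq\{u=0\}\}$ is positive and attained at some center $z^*$; maximality forces $\partial B_{\rho^*}(z^*)\cap F(u)\neq\emptyset$ (otherwise a tubular neighbourhood of $\partial B_{\rho^*}(z^*)$ would lie in $\{u=0\}$ and allow enlargement). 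I pick $y$ in this intersection, set $\nu:=(y-z^*)/\rho^*$, and note that for each $\rho'\in(0,\rho^*]$ the ball $B_{\rho'}(y-\rho'\nu)$ lies in $\overline{B}_{\rho^*}(z^*)\subseteq\{u=0\}$ and is internally tangent to $F(u)$ at $y$.

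Using $d(y)\ge\sigma_0>1/\lambda$, I would then choose $\rho'>0$ small and $R$ satisfying $\rho'+1/\lambda<R<d(z')$, where $z':=y-\rho'\nu$; this is possible since $d(z')\ge d(y)-\rho'\ge\sigma_0-\rho'>\rho'+1/\lambda$ whenever $\rho'<(\sigma_0-1/\lambda)/2$. The radial cone
\[
\varphi(x) := \frac{|x-z'|-\rho'}{R-\rho'}
\]
is infinity harmonic on $B_R(z')\setminus\{z'\}$ and, in particular, of class $C^2$ near $y$. On the boundary of the annulus $B_R(z')\setminus\overline{B}_{\rho'}(z')$ one has $u=0=\varphi$ on the inner sphere and $u\le 1=\varphi$ on the outer sphere, so the comparison principle for the infinity Laplacian gives $u\le\varphi$ throughout the annulus; combined with $u=0=\varphi^+$ inside $B_{\rho'}(z')$, this yields $u\prec_y\varphi^+$ in a neighborhood of $y$. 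Since $\nabla\varphi(y)=\nu/(R-\rho')\neq 0$ and $|\nabla\varphi(y)|=1/(R-\rho')<\lambda$, condition (c2) is violated and the proof is complete.

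The main obstacle I anticipate is isolating a point $y\in F(u)$ carrying a genuine interior tangent ball: this is precisely where the hypothesis $\interior\{u=0\}\neq\emptyset$ enters, without which the inner ball argument collapses, in accordance with the existence of trivial solutions alluded to in the introduction.
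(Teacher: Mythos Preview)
Your argument is correct. For the first inequality your parametrization by arclength and the choice of $s^*$ reproduce exactly the paper's reasoning (the paper phrases it as taking the point of $]y,x[\cap F(u)$ nearest to $\partial\Omega$ and applying the Lipschitz bound from Proposition~\ref{p:gradbound}).

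For the second part the overall scheme is the same as the paper's---locate a point $y\in F(u)$ admitting an interior tangent ball inside $\{u=0\}$, produce a barrier above $u$, and invoke condition~(c2)---but the barrier is different. The paper works \emph{directly}: it sets $r:=\dist(\partial\Omega,F(u))$, uses the global superharmonic function $v(x)=\frac{1}{r}\dist(x,F(u))$ on $\Omega^+(u)$ to get $u\le v$, and then linearizes $v$ at the tangent point via the inner ball to obtain a test function with slope $1/r$, so (c2) yields $1/r\ge\lambda$. You instead argue by contradiction and use a \emph{cone} $\varphi(x)=\frac{|x-z'|-\rho'}{R-\rho'}$ as a local barrier on an annulus, after recording that $u$ is infinity subharmonic on all of $\Omega$ so that comparison can be carried across $F(u)$. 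Your route trades the citation that distance-to-a-set is infinity superharmonic for the (easy) observation that the extension of $u$ by zero remains subharmonic; both are legitimate and yield the same conclusion. A minor remark: your subharmonicity lemma is convenient but not indispensable, since one could equally compare $u$ and $\varphi$ on $\Omega^+(u)\cap(B_R(z')\setminus\overline B_{\rho'}(z'))$, where the extra boundary piece in $F(u)$ satisfies $u=0\le\varphi$.
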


\begin{proof}
	Let $x\in F(u)$ and let 
$y\in\Pi _{\partial \Omega}(x)\subset\partial\Omega$ be a closest point to $\partial\Omega$.
	If $]y,x[ \cap F(u) \neq \emptyset$, let $x_0\in ]y,x[ \cap F(u)$ be the nearest point
	of $]y,x[ \cap F(u)$ to $\partial\Omega$, otherwise let $x_0 := x$.
	By Proposition~\ref{p:gradbound}, we have 
	\[
	1 = u(y) - u(x_0) \leq \lambda\, d(x_0),
	\]
	hence
	\[
	d(x) \geq d(x_0)\geq
	\frac{1}{\lambda},
	\]
	i.e.\ $x\in \Omc{\frac{1}{\lambda}}$.
	Hence, $F(u) \subseteq \Omc{\frac{1}{\lambda}}$, i.e.\ 
	$\dist( F(u), \partial\Omega) \geq \frac{1}{\lambda}$.
	
	Let us prove that, if $\interior\{u=0\}\neq\emptyset$,
	then also the opposite inequality holds.
	Let $r := \dist(\partial\Omega, F(u))$.
	The function $v(x) := \frac{1}{r}\, \dist (x, F(u))$ is infinity superharmonic in $\Omega^+(u)$
	(see {\it e.g.}\ \cite[p.~212]{JuuLinMan}), and satisfies
	$v=0$ on $F(u)$ and $v\geq 1$ on $\partial\Omega$.
	Hence, by the comparison principle  for infinity harmonic functions \cite[Theorem 2.22]{Jen},
	we have that $v \geq u$ in $\Omega^+(u)$.
	Since $\interior\{u=0\}\neq\emptyset$, there exists a ball $B=B_\rho(y)\subset\Omega\setminus\Omega^+(u)$
	that is tangent to $F(u)$ at some point $x_0\in F(u)$.
	Hence, 
	\[
	u(x) \leq v(x) \leq \frac{|x-x_0|}{r}\leq \frac{|x-y|-\rho}{r} =: \varphi(x),
	\qquad x\in \Omega^+(u), 
	\]	
so that
\[
u \prec_{x_0} \varphi ^+\, , \qquad |\nabla\varphi(x_0)|=  \frac{1}{r} (\neq 0) \,,
\] 
and by Definition~\ref{d:desilva}(c2) we conclude that $1/r \geq \lambda$. 
\end{proof}

\bigskip

\subsection{Properties of infinity--harmonic potentials.}
\label{ss:potentials}

\begin{definition}\label{d:inftypot} 
Given a non-empty compact set  $K\subset\Omega$, 
the infinity--harmonic potential of $K$ relative to $\Omega$ 
is the unique viscosity solution $w_K$ to the problem
\begin{equation}\label{f:pot}
\begin{cases}
-\Dinf w_K = 0, & \text{in}\ \Omega\setminus K,\\
w_K  = 1, &\text{on}\ \partial\Omega,\\
w_K  = 0, &\text{on}\  K.
\end{cases}
\end{equation}
\end{definition}

\begin{remark}\label{r:cc}  Since $\Omega \setminus K$ may be disconnected, some words to explain the well-posedness of the above definition are in order. Let us write the open set $\Omega\setminus K$ as the union of its connected components
$\{A^\alpha:\ \alpha\in I\}$. For every $\alpha\in I$, 
we have that $\partial A^\alpha \subseteq\partial\Omega \cup K$,
and the function $f^\alpha\colon\partial A^\alpha\to\R$ defined by
\begin{equation}\label{f:fa}
f^\alpha :=
\begin{cases}
1, &\text{on}\ \partial A^\alpha \cap \partial\Omega,\\
0, &\text{on}\  \partial A^\alpha \cap K,
\end{cases}
\end{equation}
is continuous on $\partial A^\alpha$ (being constant on each connected component
of $\partial A^\alpha$). Therefore, for every $\alpha \in I$, there exists a unique solution $w^\alpha \in C(\overline{A^\alpha})$
to the Dirichlet problem
\[
\begin{cases}
-\Dinf w^\alpha = 0, & \text{in}\ A^\alpha,\\
w^\alpha = f^\alpha, & \text{on}\ \partial A^\alpha
\end{cases}
\]
 (see \cite[Theorems 3.1 and 6.1]{ArCrJu}).
 Consequently,  problem
\eqref{f:pot} admits a unique solution, which is precisely 
the function $w_K\in C(\overline\Omega)$ defined by
$w_K = w^\alpha$ on $\overline{A^\alpha}$, $\alpha\in I$. 
\end{remark}

\begin{remark} It is clear from Definition \ref{d:inftypot} that the set $K$ is contained in   $\{w_K = 0\}$. We point out that the inclusion may be strict.
For instance, this happens when  $\Omega = B_2(0)$ and $K = \partial B_1(0)$: in this case, $K$ is strictly contained in $\{w_K = 0\} = \overline{B}_1$.  
\end{remark}

In general, given a non-empty compact set  $K\subset\Omega$, necessary and sufficient conditions for the equality $K=\{w_K = 0\}$ can be given by looking at 
the connected components $A ^ \alpha$ of $\Omega \setminus K$ introduced in Remark \ref{r:cc}. 
Letting 
\begin{equation}\label{f:I0}
I _0:=\{\alpha\in I:\ \partial A ^ \alpha \subseteq K\}
\end{equation}
we have

\begin{lemma}\label{l:clarify}
For a given non-empty compact set  $K\subset\Omega$,	the following properties are equivalent:
	\begin{itemize}
	
	\item[1)] $\{w_K = 0\} = K$; 
	
		\item[2)]  the set $I_0$ defined in \eqref{f:I0} is empty;

		\item[3)]
		every point $x\in\Omega\setminus K$ can be joined to $\partial \Omega$ through a path in  $\overline \Omega \setminus K$. 
	\end{itemize}
\end{lemma} 

\begin{proof} 
The equivalence between 2) and 3) follows immediately from the fact that the points 
 in $\overline \Omega \setminus K$  which can be joined to $\partial \Omega$ through a path in  $\overline \Omega \setminus K$ are precisely the points in 
$\overline \Omega \setminus K$ which belong to some 
set $A ^ \alpha$ with $\alpha \in I \setminus I _0$.

The fact that each of these conditions is equivalent to 1) follows by observing that

\begin{equation}\label{f:0set}
\{w_K = 0\} = K \cup \bigcup_{\alpha\in I_0} A^\alpha\, , 
\qquad
\{w_K > 0\}= \bigcup_{\alpha\in I\setminus I_0} A^\alpha\,.
\end{equation}

Indeed, it is clear that $f^\alpha \equiv 0$ for $\alpha \in I _0$. On the other hand,  for 
every $x_0$ belonging to a set $A ^ \alpha$ with $\alpha \in I \setminus I _0$, $w_K (x_0)$ is strictly positive, because
such a point
$x_0$ can be joined to $\partial \Omega$ through a path in  $\overline \Omega \setminus K$.  Consequently, 
the value $w_K (x_0)$ can be estimated from below by a positive constant according to the next result (which is essentially taken from \cite[Lemma 3.2]{Bh2002}).
\end{proof}

\begin{proposition}[Harnack inequality]\label{p:harnack}
	Let $K\subset\Omega$ be a non-empty compact set, and let $w_K$ be its infinity--harmonic potential  relative to $\Omega$. 
	Let $x_0\in A ^ \alpha$, with $\alpha \in I \setminus I _0$, and let 
	$\gamma$ be a path in  $\overline{\Omega}\setminus K$ connecting $x_0$ to $\partial\Omega$.
	Then 
	\begin{equation}\label{f:hk}
	w_K(x_0) \geq e^{-L/\delta},
	\end{equation}
	where $L$ is the length of $\gamma$, and $\delta$ is the distance from $\gamma$ to $K$.
\end{proposition}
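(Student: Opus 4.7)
The natural strategy is to combine a local ``downward cone'' comparison for $w_K$ with a chaining argument along the path $\gamma$. The key local ingredient I would first establish is the following one-step estimate: for every $p \in \overline{\Omega} \setminus K$ with $\dist(p,K) \geq \delta$ and every $q \in \overline{B_\delta(p)} \cap \overline{\Omega}$,
\[
w_K(q) \geq w_K(p)\left(1 - \frac{|q-p|}{\delta}\right).
\]
To prove this, I would compare $w_K$ with the downward cone $c(x) := w_K(p)\bigl(1 - |x-p|/\delta\bigr)$, which is infinity-harmonic in $\R^n\setminus\{p\}$ (a direct computation, since $|x-p|$ is itself infinity-harmonic off its apex). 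Both $c$ and $w_K$ are then infinity-harmonic in the open set $D := (B_\delta(p)\cap\Omega)\setminus\{p\}$, and one checks $c \leq w_K$ on $\partial D$ in three pieces: on $\partial B_\delta(p) \cap \overline{\Omega}$ the cone vanishes while $w_K \geq 0$; on $B_\delta(p)\cap\partial\Omega$ the cone is bounded above by $w_K(p) \leq 1 = w_K$; and at $p$ itself $c(p) = w_K(p)$. Jensen's comparison principle then gives $c \leq w_K$ throughout $D$.

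With this estimate in hand, I would parametrize $\gamma$ by arc-length as $\gamma\colon[0,L]\to\overline{\Omega}\setminus K$, with $\gamma(0)=x_0$ and $\gamma(L)\in\partial\Omega$ (after cutting $\gamma$ at the first time it meets $\partial\Omega$, if necessary; this only shortens $L$ and keeps $\delta$ the same, so it strengthens the bound). For $N\in\N$ large enough that $h := L/N < \delta$, set $p_i := \gamma(ih)$ for $i = 0,\dots,N$. Since each $p_i$ lies on $\gamma$, one has $\dist(p_i,K)\geq\delta$, while $|p_{i-1}-p_i|\leq h$ because the Euclidean distance is bounded by the arc-length. Applying the one-step estimate with center $p_i$ and point $p_{i-1}$ gives $w_K(p_{i-1}) \geq w_K(p_i)(1 - h/\delta)$, and iterating yields
\[
w_K(x_0) = w_K(p_0) \geq w_K(p_N)\,(1 - h/\delta)^N = \left(1 - \frac{L}{N\delta}\right)^N,
\]
since $w_K(p_N) = 1$. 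Letting $N\to\infty$ delivers $w_K(x_0) \geq e^{-L/\delta}$.

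The one point I expect to be genuinely delicate is the final step of the chain, where the center $p_N$ lies on $\partial\Omega$ and so $B_\delta(p_N)$ is not contained in $\Omega$. The comparison nevertheless goes through in $D := (B_\delta(p_N)\cap\Omega)\setminus\{p_N\}$, precisely because the bound $c \leq 1 = w_K$ on $B_\delta(p_N)\cap\partial\Omega$ takes care of the extra portion of boundary introduced by the boundary of $\Omega$. Apart from checking this boundary case and verifying applicability of the infinity-Laplacian comparison principle on the (possibly irregular) open set $D$, the remainder of the argument is purely bookkeeping: the exponential factor $e^{-L/\delta}$ emerges entirely from the passage $(1-L/(N\delta))^N \to e^{-L/\delta}$ in the limit of finer and finer partitions of $\gamma$.
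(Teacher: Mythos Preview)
Your argument is correct and follows essentially the same route as the paper: iterate the downward-cone comparison $w_K(q)\geq w_K(p)\bigl(1-|q-p|/\delta\bigr)$ along a fine subdivision of $\gamma$, then let the mesh size go to zero so that $(1-L/(N\delta))^N\to e^{-L/\delta}$. The only cosmetic differences are that the paper first reduces to a polygonal approximation of $\gamma$ (whereas you use the arc-length parametrization directly) and is less explicit than you are about the last step where the comparison ball meets $\partial\Omega$; your treatment of that case via Jensen's comparison on $(B_\delta(p)\cap\Omega)\setminus\{p\}$ is in fact the cleaner way to handle it.
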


\begin{proof}

By possibly taking a slightly larger value of $\delta$ (but smaller than $\dist(\gamma, K)$),
it is not restrictive to assume that $\gamma$ is a polygonal curve.
Moreover, for $m\in\N$ large enough, we can assume that the polygonal has exactly $m+1$
vertices $x_0, x_1, \ldots, x_m = y$ with
$|x_j - x_{j-1}| = L/m$ for every $j=1,\ldots,m$.
By possibly moving a bit the point $y$ (shortening the curve),
we can also assume that $y$ is a 
closest point in $\partial\Omega$ from $x_{m-1}$.
Since $x_{j-1}\in B_\delta(x_{j}) \subset \Omega\setminus K$ for every $j = 1, \ldots, m$,
by comparison with cones,  we have 
\[
w_K(x_{j-1}) \geq w_K(x_j) \left(1-\frac{|x_j - x_{j-1}|}{\delta}\right)
= w_K(x_j) \left(1-\frac{L}{m\delta}\right),
\]	
so that
\[
w_K(x_0) \geq w_K(y) \left(1-\frac{L}{m\delta}\right)^m.
\]
Since $w_K(y) = 1$ and $m$ can be taken arbitrarily large, we finally get \eqref{f:hk}.
\end{proof}

\bigskip
We conclude with  a useful characterization of the infinity harmonic potential $w_K$ along rays connecting $K$ with $\partial \Omega$:

\begin{proposition}[potential along rays]\label{p:pot}
	Let $K\subset\Omega$ be a non-empty compact set, and let $w_K$ be the infinity--harmonic potential of $K$ relative to $\Omega$.
	If $y\in\partial\Omega$ and $z\in K$ are two points
	such that $|y-z| = \dist(\partial\Omega, K)$, then $w_K$ is affine on the segment $[y,z]$.
\end{proposition}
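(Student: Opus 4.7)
The plan is to sandwich $w_K$ on the segment $[y,z]$ between two natural barriers built from distance functions. Setting $r := |y-z| = \dist(\partial\Omega, K)$ and writing $x_t := (1-t)y + tz$ for $t\in[0,1]$, the goal becomes to prove $w_K(x_t) = 1-t$ for every $t$, which is precisely affinity. As upper barrier I take $v(x) := \dist(x,K)/r$, which (like the distance from any closed set) is infinity superharmonic on its complement, hence on $\Omega\setminus K$; as lower barrier I take $u(x) := 1 - \dist(x,\partial\Omega)/r$, which is infinity subharmonic in $\Omega$ because $\dist(\cdot,\partial\Omega)$ is infinity superharmonic there (both facts are of the kind already invoked in the proof of Proposition~\ref{c:gradbound}).

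The key point is that the hypothesis $r = \dist(\partial\Omega, K)$ forces the two barriers to compare correctly with $w_K$ on $\partial(\Omega\setminus K) \subseteq \partial\Omega \cup K$: on $\partial\Omega$ one has $v \geq 1 = w_K$ (because $\dist(\cdot,K)\geq r$ there) and $u = 1 = w_K$, while on $K$ one has $v = 0 = w_K$ and $u \leq 0 = w_K$ (because $\dist(\cdot, \partial\Omega) \geq r$ on $K$). Applying Jensen's comparison theorem component by component on $\Omega\setminus K$, I obtain $u \leq w_K \leq v$ throughout. Evaluating on the segment, the trivial estimates $\dist(x_t, \partial\Omega) \leq |x_t - y| = tr$ and $\dist(x_t, K) \leq |x_t - z| = (1-t)r$ give $u(x_t) \geq 1-t$ and $v(x_t) \leq 1-t$, so
$$1-t \;\leq\; u(x_t) \;\leq\; w_K(x_t) \;\leq\; v(x_t) \;\leq\; 1-t.$$
The chain collapses to $w_K(x_t) = 1-t$, which is the claim.

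The step I expect to be the most delicate is the application of the comparison principle on the possibly disconnected and irregular open set $\Omega\setminus K$: one should apply Jensen's theorem separately on each connected component $A^\alpha$ (as in the first remark after Definition~\ref{d:inftypot}) and check that both barrier inequalities hold on $\partial A^\alpha \subseteq \partial\Omega \cup K$, which follows cleanly from the two pointwise inequalities above. A small preliminary check, also worth making, is that the open segment $(y,z)$ actually lies in $\Omega\setminus K$; this follows from $|y-z|=\dist(\partial\Omega,K)$, since any $x_t$ with $t\in(0,1)$ is at distance $tr<r$ from $y\in\partial\Omega$ and $(1-t)r<r$ from $z\in K$, ruling out $x_t \in K \cup \partial\Omega$.
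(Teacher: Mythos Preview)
Your proof is correct and follows essentially the same sandwich argument as the paper. The upper barrier $v(x)=\dist(x,K)/r$ is exactly the paper's function $g$, handled identically via the comparison principle; for the lower barrier the paper uses the single cone $f(x)=1-|x-y|/r$ and comparison with cones on $B_r(y)\cap\Omega$, whereas you use $u(x)=1-d(x)/r$ and the comparison principle on $\Omega\setminus K$ --- a cosmetic difference, since on the segment both barriers collapse to $1-t$ in the same way.
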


\begin{proof} 
	Set $R:= \dist(\partial\Omega, K)$. Since $w_K$ enjoys comparison with cones from below, we have 
	\[
	w_K(x) \geq f(x) := 1 - \frac{|x-y|}{R},
	\qquad
	\forall x \in B_R(y) \cap \Omega.
	\]
	On the other hand, 
	the function $g(x) := \frac{1}{R}\, \dist(x, K)$ is infinity superharmonic
	in $\Omega\setminus K$, with 
	$g=0$ on $K$ and $g\geq 1$ on $\partial\Omega$, hence
	$g\geq w_K$ by the comparison principle for infinity harmonic functions. 
	Since $f = g$ on the segment $[y,z]$,
	the statement follows.
\end{proof}

\section{Existence}

We start the analysis of existence of solutions to problem \ref{f:P} 
by observing that, for any $\lambda \geq 1/\inradius$,  it admits many  solutions whose zero level set is Lebesgue negligible.  
Inspired by the results of the previous section, they are found among
 infinity harmonic potentials $w_K$ of suitably chosen compact
sets $K$ contained in  
$\Omc{\frac{1}{\lambda}}$. 
Recall that the zero set of $w_K$ can be characterized as in \eqref{f:0set}; in particular, by Lemma~\ref{l:clarify},
we have that $\{w_K = 0\} = K$ if and only if the set $I_0$ defined in~\eqref{f:I0} is empty.

\begin{proposition}\label{p:trivial}
	Let $\lambda \geq 1/\inradius$, and let $K \subseteq \Omc{\frac{1}{\lambda}}$ be a
	non-empty compact set. 
	Assume that
	\begin{equation}\label{f:trivial}\interior (K) = \emptyset \qquad \text{  and  } \qquad I _0 = \emptyset \,.\end{equation}
	Then the infinity--harmonic potential $w_K$ of $K$ relative to $\Omega$
 is a solution to \ref{f:P}.
\end{proposition}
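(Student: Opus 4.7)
The plan is to verify the three conditions of Definition~\ref{d:desilva} for $u=w_K$. The two hypotheses first serve to identify the relevant sets: by $I_0=\emptyset$ together with \eqref{f:0set} we get $\{w_K=0\}=K$, so that $\Omega^+(w_K)=\Omega\setminus K$; combined with the empty-interior condition this also gives $F(w_K)=\partial(\Omega\setminus K)\cap\Omega=K$. Condition (a) is then immediate from the definition of the infinity-harmonic potential, and condition (b) from its boundary datum. The entire remaining work is to verify the free boundary condition (c) at every $y\in K$.

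For the subsolution inequality (c1), I would first establish the global upper bound
\[
w_K(x) \leq \lambda\,\dist(x,K) \qquad \text{in}\ \overline{\Omega}.
\]
The function $v(x):=\lambda\,\dist(x,K)$ is infinity superharmonic in $\Omega\setminus K$ (as an infimum of cones), vanishes on $K$, and on $\partial\Omega$ satisfies $v(x)\geq\lambda\,\dist(\partial\Omega,K)\geq 1=w_K(x)$, thanks to the hypothesis $K\subseteq\Omc{1/\lambda}$ (which reads $\dist(\partial\Omega,K)\geq 1/\lambda$). The comparison principle for infinity harmonic functions then yields $v\geq w_K$ in $\Omega\setminus K$. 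With this in hand, given a test function $\varphi$ with $\varphi^+\prec_y w_K$, $\varphi(y)=0$ and $\nabla\varphi(y)\neq 0$, set $\nu:=\nabla\varphi(y)/|\nabla\varphi(y)|$ and evaluate along $x_t:=y+t\nu$ for $t>0$ small:
\[
|\nabla\varphi(y)|\,t + o(t) = \varphi^+(x_t) \leq w_K(x_t) \leq \lambda\,\dist(x_t,K) \leq \lambda\,t,
\]
since $y\in K$ gives $\dist(x_t,K)\leq|x_t-y|=t$. Dividing by $t$ and letting $t\to 0^+$ yields $|\nabla\varphi(y)|\leq\lambda$.

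For the supersolution inequality (c2), I would show that its premise is \emph{vacuously} satisfied. Suppose to the contrary that $\varphi\in C^2$ satisfies $w_K\prec_y\varphi^+$ with $\nabla\varphi(y)\neq 0$. The touching condition forces $\varphi(y)\leq 0$, and the case $\varphi(y)<0$ is ruled out because it would make $\varphi^+$ vanish in a full neighborhood of $y$, forcing $w_K\equiv 0$ there and contradicting $\interior(K)=\emptyset$. Thus $\varphi(y)=0$; then, setting $\nu:=\nabla\varphi(y)/|\nabla\varphi(y)|$, a second-order Taylor expansion at $y$ shows that on the truncated open cone
\[
\mathcal{C} := \{x\neq y :\ \nu\cdot(x-y) < -\tfrac12|x-y|,\ |x-y|<r\},
\]
for $r>0$ small enough one has $\varphi(x)<0$, hence $\varphi^+\equiv 0$ on $\mathcal{C}$, and consequently $\mathcal{C}\subseteq\{w_K=0\}=K$. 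Since $\mathcal{C}$ is a non-empty open set, this contradicts $\interior(K)=\emptyset$.

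The step I expect to be the main obstacle is (c2), where the analytic condition ``$\varphi^+$ touches $w_K$ from above at $y$'' must be turned into the geometric statement that an open cone is contained in $\{w_K=0\}=K$, and then the empty-interior hypothesis must be invoked to close the loop. By contrast, (c1) is a routine distance-function comparison, with the localization $K\subseteq\Omc{1/\lambda}$ entering only to secure the boundary inequality for the barrier $v$; the assumption $I_0=\emptyset$ is used only at the very beginning, to identify $\{w_K=0\}$ with $K$.
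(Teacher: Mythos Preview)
Your proof is correct and follows essentially the same approach as the paper's. The only cosmetic difference is in (c2): rather than exhibiting an open cone where $\varphi<0$, the paper observes directly that since $\{w_K>0\}=\Omega\setminus K$ is dense in $\Omega$ (by $\interior K=\emptyset$), the inequality $\varphi^+\geq w_K>0$ on a dense set forces $\varphi\geq 0$ near $y$ by continuity, so $y$ is a local minimum of $\varphi$ and hence $\nabla\varphi(y)=0$.
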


\begin{proof}
	By \eqref{f:trivial} and \eqref{f:0set}, the set 
	$\{w_K = 0\}$  agrees with $K$ and has empty interior,
	so that $\{w_K = 0\} = K = F(w_K)$.
	Thus, we have to show that the free boundary condition in Definition~\ref{d:desilva}
	is satisfied at every point $x_0 \in K$.
	
	Since $K\subseteq \Omc{\frac{1}{\lambda}}$, by comparison with cones we have that
	$w_K(x) \leq \lambda |x-x_0|$ for every $x\in\Omega$.
	If $\varphi^+ \prec_{x_0} w_K$, then necessarily $|\nabla\varphi(x_0)| \leq \lambda$, 
	hence condition (c1) in Definition~\ref{d:desilva} is satisfied.
	
	If $w_K \prec_{x_0} \varphi^+$, 
	then $\varphi \geq 0$ in $\Omega$, because $\varphi^+ \geq w_K > 0$ in $\Omega\setminus \{w_K= 0 \}$
	and $\overline {\Omega \setminus \{w_K = 0 \}} = \overline \Omega$.
	Since $\varphi(x_0) = 0$, then $x_0$ is a minimum point for the regular function
	$\varphi$, hence we can conclude that $\nabla\varphi(x_0) = 0$,
	and also condition (c2) in Definition~\ref{d:desilva} is satisfied. 	
\end{proof}

Motivated by Proposition~\ref{p:trivial}, we give the following definition.

\begin{definition}[Genuine solutions]\label{d:triv}
	We say that a solution $u$ to \ref{f:P} is genuine
	if the set $\{u = 0\}$ has non-empty interior (and non-genuine otherwise). 
\end{definition}

\begin{remark}\label{r:inradius}
In the special case $\lambda = \frac{1}{\inradius}$, problem \ref{f:P} admits only non-genuine solutions. Indeed, 
we know from  Proposition \ref{c:gradbound} that, for every solution $u$ to problem \ref{f:P},  $F (u)$ is contained in  the high ridge $\{ d (x) = \inradius \}$ and hence the set $\{ u = 0 \}$ has necessarily empty interior. 
\end{remark}

We are now going to deal with the existence of genuine solutions  to \ref{f:P}, for 
$\lambda > \frac{1}{\inradius}$.
To that aim, we introduce two more definitions.

\begin{definition}\label{d:w} 
Given $r \in (0, \inradius)$, we define $w_r$ as the infinity harmonic potential of $\overline \Omega _r $ relative to $\Omega$, 
namely the unique solution to
\[
\begin{cases}
\Dinf w_r = 0 & \text{ in } D_r:= \Omega \setminus \overline \Omega _r
\\
w_r= 1 & \text{ on } \partial \Omega
\\ 
w_r = 0 & \text{ in }  \overline \Omega_{r}\,.
\end{cases} 
\]

\end{definition}

\begin{figure}
\includegraphics{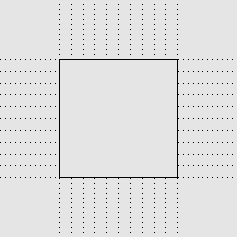}
	\caption{The sets $\Omega = (-2,2)\times (-2,2)$ (grey), $\partial\Omega_1$ (black),
	and $\widehat{D}_1$ (dotted)}\label{fig:hat}
\end{figure}

\begin{definition}\label{d:hatD} 
Given $r \in (0, \inradius]$, 
we set
\[
\widehat{D}_r := \bigcup_{y\in\partial\Omega_r}
\left\{
]y,z[:\ z\in\Pi_{\partial\Omega}(y)
\right\}\,,
\]
where $\Pi _{\partial \Omega}(y) \subset\partial\Omega$ is the set of projections of $y$
defined in~\eqref{f:Pi}.
\end{definition}

\begin{remark}\label{r:hatD} 
Notice that, by definition, $\widehat D _r$ is a subset of $D_r$, with possibly strict inclusion (think for instance to the case when $\Omega$ is a square, see Figure \ref{fig:hat}). 
\end{remark}

\begin{theorem}\label{t:B}
	(a) For every $\lambda >  \frac{1}{\inradius}$,
	the function $w_{\frac{1}{\lambda}}$ is a genuine solution to
	problem \ref{f:P}; moreover it satisfies the estimates
	\begin{equation}\label{f:cfdist}
		1- \lambda d(x) \leq  w_{\frac{1}{\lambda}} (x) 
		\leq \lambda \dist (x, \partial \Omega _{\frac{1}{\lambda}} )  \quad \text{ in }  \overline D_{\frac{1}{\lambda}} \,, 
		\text{ with equalities in } \widehat D_{\frac{1}{\lambda}}\,.
	\end{equation}
	(b) For every $\lambda \in \big( 0, \frac{1}{\inradius} \big)$,
	problem \ref{f:P} does not admit non-constant solutions.
\end{theorem}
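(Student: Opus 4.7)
The plan is to handle the two statements separately. Part (b) is a direct consequence of Proposition \ref{c:gradbound}: any non-constant solution $u$ of \ref{f:P} satisfies $\{u=0\}\subseteq\{d\geq 1/\lambda\}$, and for $\lambda<1/\inradius$ we have $1/\lambda>\inradius=\max_{\overline\Omega}d$, so this inclusion forces $\{u=0\}=\emptyset$ and hence $F(u)=\emptyset$. Then $u$ is infinity harmonic on all of $\Omega$ with boundary value $1$, and the comparison principle for $\Dinf$ (\cite[Theorem~2.22]{Jen}) gives $u\equiv 1$, contradicting non-constancy.

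For part (a), set $r=1/\lambda<\inradius$. Non-triviality of $w_r$ is clear, since $\{w_r=0\}\supseteq\overline{\Omega_r}\supseteq\Omega_r\neq\emptyset$; by construction $w_r$ is infinity harmonic in $D_r\supseteq\Omega^+(w_r)$ and equals $1$ on $\partial\Omega$. The core of the argument is the sandwich estimate \eqref{f:cfdist}, from which the viscosity free-boundary conditions will follow by a short pointwise argument. For the lower bound, the function $v(x)=1-\lambda d(x)$ is infinity subharmonic in $D_r$ (because $d$ is infinity superharmonic in $\Omega$), satisfies $v=1=w_r$ on $\partial\Omega$ and $v=0=w_r$ on $\partial\Omega_r$, so the comparison principle yields $v\leq w_r$ in $D_r$. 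For the upper bound, the function $h(x)=\lambda\dist(x,\overline{\Omega_r})$ is infinity superharmonic in $D_r$, vanishes on $\partial\Omega_r$, and satisfies $h\geq 1$ on $\partial\Omega$ (since any $z\in\overline{\Omega_r}$ has $d(z)\geq r$, so every $x\in\partial\Omega$ has $\dist(x,\overline{\Omega_r})\geq r$); comparison again yields $w_r\leq h=\lambda\dist(\,\cdot\,,\partial\Omega_r)$ in $D_r$. On $\widehat D_r$ both bounds become equalities, since $\dist(\partial\Omega,\overline{\Omega_r})=r$ is realized precisely along the segments appearing in Definition \ref{d:hatD}, and Proposition \ref{p:pot} (with $K=\overline{\Omega_r}$) forces $w_r$ to be affine on each such segment.

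It remains to verify the viscosity free-boundary conditions at every point of $F(w_r)$, once $F(w_r)$ is identified with $\partial\Omega_r=\{d=r\}$. This identification reduces, via \eqref{f:0set}, to showing that every connected component $A$ of $D_r$ meets $\partial\Omega$; this is elementary, since for any $x\in A$ the open segment joining $x$ to its nearest point on $\partial\Omega$ lies in $D_r$ and hence in $A$. Fix now $x_0\in\partial\Omega_r$. Condition (c1) follows from the upper bound $w_r(x)\leq\lambda|x-x_0|$ near $x_0$: a test $\varphi^+\prec_{x_0}w_r$ with $\nabla\varphi(x_0)\neq 0$ satisfies $\varphi(x)\leq w_r(x)\leq\lambda|x-x_0|$ in a neighborhood, so a Taylor expansion along the direction $\nabla\varphi(x_0)$ yields $|\nabla\varphi(x_0)|\leq\lambda$. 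Condition (c2) exploits the equality: picking $z\in\Pi_{\partial\Omega}(x_0)$, the segment $]x_0,z[$ lies in $\widehat D_r$ and along it $w_r(x)=\lambda|x-x_0|$, so $w_r\prec_{x_0}\varphi^+$ with $\nabla\varphi(x_0)\neq 0$ gives $\varphi(x)\geq\lambda|x-x_0|$ on the segment, whence the directional derivative of $\varphi$ at $x_0$ along $(z-x_0)/|z-x_0|$ is at least $\lambda$, forcing $|\nabla\varphi(x_0)|\geq\lambda$.

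The main obstacle I foresee is the free-boundary verification, where the two inequalities of \eqref{f:cfdist}—the upper one valid globally in $D_r$, the lower one as an equality only along distance-realizing rays—must be combined with the geometric fact that every point of $\partial\Omega_r$ is the endpoint of at least one such ray. Everything else reduces to routine comparison-principle arguments and the ray-affine behaviour of $w_r$ supplied by Proposition \ref{p:pot}.
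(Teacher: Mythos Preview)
Your proof is correct and follows the same overall strategy as the paper: establish the two-sided estimate \eqref{f:cfdist} by comparison with $1-\lambda d$ and $\lambda\dist(\cdot,\partial\Omega_r)$, then read off the free-boundary conditions from it. There are, however, a few places where your route diverges from the paper's and is in fact cleaner. For the equality case on $\widehat D_r$ you invoke Proposition~\ref{p:pot} directly, whereas the paper argues via $\|\nabla w_r\|_\infty=\lambda$ and a contradiction; both are fine, yours is shorter. For condition~(c2) the paper first proves an auxiliary claim (that $\nabla\varphi(x_0)$ must be a positive multiple of the ray direction $\nu$, via an exterior-sphere argument and a local parametrization of $\partial B_{1/\lambda}$), and only then concludes $|\nabla\varphi(x_0)|\geq\lambda$. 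You bypass this: since $w_r(x_0+t\nu)=\lambda t$ along the ray, the inequality $\varphi^+\geq w_r$ immediately gives $\langle\nabla\varphi(x_0),\nu\rangle\geq\lambda$, hence $|\nabla\varphi(x_0)|\geq\lambda$, without ever needing to know the direction of $\nabla\varphi(x_0)$. For part~(b) you use Proposition~\ref{c:gradbound} (which forces $\{u=0\}\subseteq\{d\geq 1/\lambda\}=\emptyset$) rather than the paper's combination of the pointwise gradient bound (Proposition~\ref{p:gradbound}) with the observation $\sup_{\Omega^+(u)}|\nabla u|\geq 1/\inradius$; these are equivalent, since Proposition~\ref{c:gradbound} is itself derived from Proposition~\ref{p:gradbound}. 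Finally, you supply an explicit argument that $F(w_r)=\partial\Omega_r$ (via $I_0=\emptyset$), which the paper simply asserts.
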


\begin{proof}
Throughout the proof, since $\lambda$ is fixed, we set for brevity 
 $$w:=w _{\frac{1}{\l}}\,, \qquad D: = 
D_{\frac{1}{\lambda}}\,, \qquad  \widehat D: = 
\widehat D_{\frac{1}{\lambda}}\,.$$ 
Let us first show that $w$ satisfies the inequalities in  \eqref{f:cfdist}.
This can be proved using the same arguments of Proposition~\ref{p:pot}.
More precisely,
the function 
$v(x) := 1- \lambda d(x)$
is infinity subharmonic 
(since $d$ is infinity superharmonic), 
and  satisfies the equality $v = w$ on both $\partial \Omega$ and $\partial \Omega _{\frac{1}{\lambda}} $. By the comparison principle for infinity harmonic functions, it follows that 
$w  \geq v$ in $\overline D$.  

Similarly, the function $z(x) := \lambda \,  \dist ( \cdot, \partial \Omega_{\frac{1}{\lambda}})$ is infinity superharmonic, and satisfies 
$z= w = 0$ on $\partial \Omega _{\frac{1}{\lambda}} $, $w \leq z$ on $\partial \Omega$.  
Again by the comparison principle for infinity harmonic functions, we infer that $w \leq z $ in $\overline D$.

In order to obtain that the inequalities in \eqref{f:cfdist} hold as equalities in $\widehat D$, we firstly notice that 
$\|\nabla w\|_\infty = \lambda$.
Indeed, the inequality  $\|\nabla w\|_\infty \geq \lambda$ follows immediately from the estimate
\[
\|\nabla w\|_\infty \geq \sup  \Big \{ \frac{|w (x) - w (y)| }{|x-y| }  \ :\ x \in \partial \Omega, \ y \in \partial \Omega _{\frac{1}{\lambda}} \Big \}\,;
\] 
the converse one follows from the fact that $w$ has the AML property in $D$, which entails in particular
$\|\nabla w\|_\infty \leq  \|\nabla v\|_\infty = \l$.

Now assume by contradiction that the strict inequality $w   > v $ holds at some point $x \in \widehat D$. 
If  $x$ belongs to the segment $]y, z[$, with $y \in \partial \Omega _{\frac{1}{\lambda}}$ and $z \in \Pi _{\partial \Omega} (y)$,      
we have that
\[
\|\nabla w\|_\infty \geq \frac{|w  ( x) - w (y)| }{|x-y | } =
\frac{w ( x) }{|x-y | } 
> 
\frac {v  ( x) }{|x-y | } 
 = \frac{|v ( x) - v (y)| }{|x-y | }  = \l \,.
\] 
Here, in the last equality we have exploited the fact that 
$d(x) - d(y) = |x - y|$. 
Indeed, if $x \in ]y, z[ \subset \widehat D_r$, with $y \in \partial \Omega _r$ and 
$z \in \Pi_{\partial\Omega} (y)$, it holds that
$d(x) = r - |x-y|$ and $\dist(x, \partial \Omega_r) = r - |x-z|$, 
which implies in particular
\begin{equation}\label{f:coincidence}
 r - d(x)  = |x-y| = r - |x-z| =  \dist (x, \partial \Omega_r)\, .
 \end{equation}
We have thus contradicted the equality $\|\nabla w\|_\infty = \l$, 
and we conclude that $w (x) = v (x)$. 
Since, by~\eqref{f:coincidence}, $v ( x) = z (x) $ on $\widehat D$, the proof of \eqref{f:cfdist} is achieved.  
	
	\smallskip
(a) We are now in a position to prove that $w$ solves problem \ref{f:P},  which amounts to show that  it satisfies the free boundary condition (c) of Definition~\ref{d:desilva}
along the free boundary $F(w) = \partial\Omega_{\frac{1}{\lambda}} $.

Let $x_0\in \partial\Omega_{\frac{1}{\lambda}} $, 
let $\varphi^+ \prec_{x_0} w$, with $p := \nabla\varphi(x_0)\neq 0$.
By the upper bound inequality in \eqref{f:cfdist}, we have 
\[
\varphi(x) \leq w(x) \leq \lambda\, \dist (x, \partial \Omega_{\frac{1}{\lambda}} )
\qquad\forall x\in D,
\]
hence
\[
\varphi(x_0 + t p) \leq \lambda\, \dist (x_0 + t p,  \partial \Omega_{\frac{1}{\lambda}} ) \leq \lambda\, t\, |p|,
\qquad t>0\ \text{small}.
\]
Dividing by $t>0$ and taking the limit as $t\to 0^+$ we get
$|p|^2 \leq \lambda\, |p|$, hence $|p| \leq \lambda$,
so that (c1) holds.

Let us now consider condition (c2) at a point 
$x_0\in \partial\Omega_{\frac{1}{\lambda}}$.
Let $y\in\Pi_{\partial\Omega}(x_0)$. 
By \eqref{f:cfdist},  the function $w$  is affine with slope $\lambda$ on the segment  $]x_0, y[ \subset \widehat D$. 
If $\varphi$ is a test function as in condition (c2), 
setting $\nu := (y-x_0) / |y-x_0|$, 
we have that
\[
|\nabla\varphi(x_0)| \geq \lim_{t\to 0+} \frac{\varphi (x_0 + t\nu)}{t} \geq \lim_{t\to 0+} \frac{w(x_0 + t\nu)}{t}
= \lambda,
\]
and (c2) follows.

	\medskip
	(b)  We observe that, if $u$ is a non-constant solution to \ref{f:P} (for an arbitrary $\lambda > 0$), 
	it holds that
	\begin{equation}\label{f:supgrad}
	\sup_{x\in\Omega^+(u)} |\nabla u(x)| \geq 1/\inradius\,.
	\end{equation}
	Indeed, if we assume that $|\nabla u(x)| \leq L < 1/\inradius$
	for every $x\in \Omega^+(u)$, then we obtain
	\[
	u(x) \geq 1 - L\, d(x)
	\geq 1 - L\, \inradius > 0
	\qquad \forall x\in \overline{\Omega^+(u)},
	\]
	a contradiction.
	
Statement (b) is a direct consequence of Proposition~\ref{c:gradbound},
since $F(u)\neq\emptyset$ for non-constant solutions.	
\end{proof}

\section{Uniqueness}

Prior to starting the analysis of the  uniqueness of solutions for problem \ref{f:P},
we emphasize that one has to 
restrict attention to the class of genuine solutions  and to choose $\lambda > 1/\inradius$. 
Indeed, if these requirements are dropped, by applying the results of the previous section we readily get the following conclusions:

\begin{itemize}
\item[--]
For $\lambda > 1/\inradius$,   according to Proposition \ref{p:trivial} there exist infinitely many non-genuine solutions to \ref{f:P}, corresponding to the infinity harmonic potentials of any  compact set $K \subseteq \Omc{\frac{1}{\lambda}}$ satisfying \eqref{f:trivial}. 
 \medskip
\item[--]
For $\lambda = 1/\inradius$,  we know that all the solutions to \ref{f:P} are non-genuine
({\it cf.}\ Remark~\ref{r:inradius}). Moreover, it is easy to see that any compact set $K$ contained in  the high rigde of $\Omega$ satisfies \eqref{f:trivial}. Therefore, there exist either one or multiple 
non-constant solutions to \ref{f:P} respectively when the high ridge is a singleton or not. 
\end{itemize}

\medskip
We are thus led to formulate the question as: 

\medskip
\centerline{
\text{\it When 
uniqueness of genuine solutions to  \ref{f:P} occurs for $\lambda > 1/\inradius$? } 
}
 
 \medskip
Our answer is given in the statement below.

\begin{theorem}[Uniqueness of genuine solutions]\label{t:unique}
	Let $\lambda > 1/\inradius$. Assume that 
		\begin{itemize}
	\item[(H1)]  
	$\Omega _{\frac{1}{\lambda}}$  is connected;
	\smallskip
	\item[(H2)]   
	$\overline {\Omega _{\frac{1}{\lambda} }}=\Omc{\frac 1 \lambda}$. 
	\end{itemize}
		Then $w_{\frac 1 \lambda}$ is the unique genuine solution to \ref{f:P}.
\end{theorem}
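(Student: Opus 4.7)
My plan is to show that any non-trivial solution $u$ must have its zero set $K := \{u=0\}$ equal to $\overline{\Omega_{1/\lambda}}$; uniqueness in Definition~\ref{d:inftypot} will then immediately force $u = w_{1/\lambda}$.

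\textbf{Step 1 (localize the free boundary).} Since $u$ is non-trivial, $\interior(K) \neq \emptyset$, so Proposition~\ref{c:gradbound} yields both $K \subseteq \{d \geq 1/\lambda\}$ and the sharp equality $\dist(F(u), \partial\Omega) = 1/\lambda$. Invoking hypothesis (H2), these rewrite as $K \subseteq \overline{\Omega_{1/\lambda}}$ and $F(u) \subseteq \{d = 1/\lambda\} = \partial\Omega_{1/\lambda}$; in particular, $F(u) \cap \Omega_{1/\lambda} = \emptyset$.

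\textbf{Step 2 (connectedness dichotomy).} The parallel set decomposes as
\[
\Omega_{1/\lambda} = \bigl(\Omega_{1/\lambda} \cap \interior(K)\bigr) \cup \bigl(\Omega_{1/\lambda} \setminus K\bigr),
\]
into two disjoint open pieces (the second equals $\Omega_{1/\lambda}\cap\Omega^+(u)$, hence is open). This does exhaust $\Omega_{1/\lambda}$ because the only points of $\Omega_{1/\lambda}$ that the union could miss lie in $\partial K \cap \Omega_{1/\lambda} = F(u) \cap \Omega_{1/\lambda}$, which is empty by Step~1. Hypothesis (H1) then forces one of the two pieces to be empty.

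\textbf{Step 3 (rule out the wrong case).} If $\Omega_{1/\lambda} \cap \interior(K) = \emptyset$ were true, the nonempty open set $\interior(K) \subseteq \overline{\Omega_{1/\lambda}}$ would be trapped inside $\partial\Omega_{1/\lambda} = \{d = 1/\lambda\}$. But this level set has empty interior: any candidate interior point $x$ can be pushed a short distance toward any of its projections on $\partial\Omega$, producing nearby points with distance from $\partial\Omega$ strictly less than $1/\lambda$. Hence the opposite alternative from Step~2 holds, i.e.\ $\Omega_{1/\lambda} \subseteq K$; combined with $K$ closed, $K \subseteq \overline{\Omega_{1/\lambda}}$, and (H2), this gives $K = \overline{\Omega_{1/\lambda}}$.

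\textbf{Step 4 (conclude).} Since $u$ is infinity harmonic in $\Omega \setminus K = D_{1/\lambda}$, vanishes on $K = \overline{\Omega_{1/\lambda}}$, and equals $1$ on $\partial\Omega$, uniqueness of the infinity--harmonic potential (Definition~\ref{d:w}) gives $u = w_{1/\lambda}$.

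The argument is essentially a topological bookkeeping, and the only step requiring a moment of care is Step~3, where hypothesis (H2) is decisive: without it, $\partial\Omega_{1/\lambda}$ might fail to coincide with the level set $\{d=1/\lambda\}$, and the exclusion of the ``wrong case'' could break down (as the sharp examples announced in the paper confirm). Hypothesis (H1) enters as pure connectedness in Step~2, and Proposition~\ref{c:gradbound} does all the analytic work upfront in Step~1.
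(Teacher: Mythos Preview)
There is a genuine gap in Step~1. Proposition~\ref{c:gradbound} gives you $K\subseteq\{d\geq 1/\lambda\}$ and, under non-triviality, the \emph{infimum} identity $\dist(F(u),\partial\Omega)=\inf_{x\in F(u)} d(x)=1/\lambda$. You then ``rewrite'' this as the pointwise inclusion $F(u)\subseteq\{d=1/\lambda\}$, but that does not follow: the distance being $1/\lambda$ only says the infimum is attained, not that every free boundary point sits on the level set. A priori, $K$ could be a compact set contained in $\overline{\Omega_{1/\lambda}}$ that touches $\{d=1/\lambda\}$ somewhere (so the infimum is $1/\lambda$) while $\partial K$ also protrudes into $\Omega_{1/\lambda}$ elsewhere. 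Since Step~2 hinges precisely on $F(u)\cap\Omega_{1/\lambda}=\emptyset$ to get a clean open-open partition, the whole dichotomy collapses.

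This is exactly the point where the paper does more work than ``topological bookkeeping''. Proposition~\ref{p:reg} (together with Lemma~\ref{l:K}) shows that $\interior\{u=0\}$ is a union of connected components of $\Omega_{1/\lambda}$, and its proof uses the free boundary condition (c2) in an essential way: at each $x\in\interior K$ one builds a cone-type comparison function touching $u$ from above at the nearest free boundary point, and (c2) forces $d(x)-\dist(x,\partial K)=1/\lambda$. That identity is what pins $\partial(\interior K)$ to $\{d=1/\lambda\}$ and rules out the ``protruding'' scenario above. Your argument only invokes (c2) indirectly through Proposition~\ref{c:gradbound}, which is not enough; to repair the proof you need either to cite Proposition~\ref{p:reg} or to reproduce its cone-comparison step. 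Once $F(u)\cap\Omega_{1/\lambda}=\emptyset$ (equivalently, $\interior K$ is a union of components of $\Omega_{1/\lambda}$) is legitimately established, your Steps~2--4 are fine and coincide with the paper's conclusion.
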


\begin{corollary}\label{c:convex}
Assume  $\Omega$ is  convex. For every  $\lambda > 1/\inradius$, $w_{\frac 1 \lambda}$ is the unique genuine solution to problem \ref{f:P}.
\end{corollary}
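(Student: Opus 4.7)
The plan is to apply Theorem \ref{t:unique} directly: it suffices to verify that, when $\Omega$ is convex, both geometric hypotheses (H1) and (H2) are automatically satisfied for every $\lambda > 1/\inradius$. The key tool is the standard fact that, when $\Omega$ is convex, the distance function $d(x) = \dist(x,\partial \Omega)$ is concave on $\overline{\Omega}$ (this follows from the fact that $d$ is the infimum, over the supporting halfspaces $H$ of $\Omega$, of the linear functions $\dist(\cdot, \partial H)$, and an infimum of concave functions is concave).

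Fix $r := 1/\lambda \in (0, \inradius)$. For hypothesis (H1), we argue that $\Omega_r = \{d > r\}$ is convex. Indeed, by concavity of $d$, for $x,y \in \Omega_r$ and $t \in [0,1]$,
\[
d(tx + (1-t)y) \geq t\, d(x) + (1-t) d(y) > r,
\]
so $tx+(1-t)y \in \Omega_r$. Moreover $\Omega_r$ is non-empty (it contains any incenter), hence connected.

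For hypothesis (H2), the inclusion $\overline{\Omega_r} \subseteq \Omc{r}$ is automatic by continuity of $d$. For the reverse inclusion, pick any $x \in \Omega$ with $d(x) = r$ and fix an incenter $x^\ast \in \Omega$, so that $d(x^\ast) = \inradius > r$. For $t \in (0,1)$, the point $x_t := (1-t) x + t x^\ast$ satisfies, again by concavity of $d$,
\[
d(x_t) \geq (1-t)\, d(x) + t\, d(x^\ast) = r + t\,(\inradius - r) > r,
\]
so $x_t \in \Omega_r$. Letting $t \to 0^+$ gives $x_t \to x$, hence $x \in \overline{\Omega_r}$. This proves $\Omc{r} \subseteq \overline{\Omega_r}$ and completes the verification of (H2).

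Having checked (H1) and (H2), Theorem \ref{t:unique} yields that $w_{1/\lambda}$ is the unique non-trivial solution to \ref{f:P}. There is no real obstacle beyond recalling the concavity of $d$ on convex domains; both hypotheses follow by a one-line convex-combination argument.
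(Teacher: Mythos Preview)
Your proof is correct and follows essentially the same approach as the paper: the corollary is deduced directly from Theorem~\ref{t:unique} once (H1) and (H2) are checked, and the paper records precisely these verifications (convexity of $\Omega_r$ and $\overline{\Omega_r}=\Omc{r}$ for convex $\Omega$) in the two remarks immediately following the statement. Your argument via the concavity of $d$ on convex domains is exactly the standard justification underlying those remarks.
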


\begin{remark}{\it (About the connectness assumption (H1)).} 
When $\Omega$ is convex, assumption (H1) is satisfied because also $\Omega _r$ is convex for every $r\in [0, \inradius)$. 
For general $\Omega$,  (H1) is satisfied if 
$\frac{1}{\lambda}< \dist(\partial\Omega, \Sigma(\Omega))$, $\Sigma (\Omega)$ being the cut locus of $\Omega$, namely
the closure of the set of points where the distance from $\partial \Omega$ is not differentiable. 
Indeed,   if $r < \dist(\partial\Omega, \Sigma(\Omega))$,
then $\Sigma(\Omega)\subset\Omega_r$ and $\Sigma(\Omega_r) = \Sigma(\Omega)$.
By Theorem~5.3 in \cite{ACNS}, $\Omega$ and $\Omega_r$ have the same homotopy class as
$\Sigma(\Omega)$. Since $\Omega$ is connected by assumption, then also 
$\Sigma(\Omega)$ and $\Omega_r$ are connected.
\end{remark}

\begin{remark}{\it (About the regularity assumption (H2)).} 
When $\Omega$ is convex, assumption (H2) is satisfied because $\overline{\Omega_r}$ agrees with $\Omc{r}$ for every $r\in [0, \inradius)$. 
For general $\Omega$,  we have the inclusion $\overline{\Omega_r} \subseteq \Omc{r}$, which may be possibly strict (see for instance Example  \ref{e:nonreg} below). 
Assumption (H2) can be also rephrased by asking that  the set $C:= \Omc{\frac 1 \lambda}$ satisfies 
$C = \overline{\interior C}$. In topology, sets satisfying this last condition are known as \textsl{regular closed} sets.
It is clear from the definition that such sets are closed in the usual sense, and have a non-empty interior if they are not empty. \end{remark}

Assumptions (H1) and (H2) are sharp, as we can have multiple genuine solutions as soon as 
 $\Omega_{ \frac 1 \lambda}$ is not connected and/or 
$\overline{\Omega_{\frac 1 \lambda}} \neq \Omc{\frac 1 \lambda} $.
This fact is illustrated in Examples~\ref{e:nonconn} and~\ref{e:nonreg} below.

\begin{example}[Multiplicity of genuine solutions without (H1)]\label{e:nonconn}
\begin{figure}
	\includegraphics{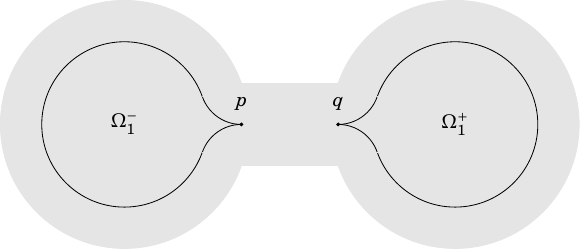}
		\caption{The set $\Omega$ of Example~\ref{e:nonconn} (grey) and $\partial\Omega_1$ (black)}\label{fig:nonconn}
\end{figure}

If $\Omega_{\frac 1 \lambda}$ is not connected, then  problem
\ref{f:P} may have more than one genuine solution.
Let us show this phenomenon with an explicit example.
Let $\Omega\subset\R^2$ be the set
\[
\Omega := B_3((-4,0)) \cup B_3((4,0)) \cup ((-4,4)\times(-1,1))
\]
(see Figure~\ref{fig:nonconn}), and let $\lambda = 1$.

The set $\Omega_1$ is not connected, since it is the disjoint union of
two connected components $\Omega_1^- := \Omega_1 \cap \{x_1 < 0\}$ and 
$\Omega^+_{1} := \Omega_1 \cap \{x_1 > 0\}$.

We have proved in Theorem~\ref{t:B} that the function $w_1$ is a solution to $(P)_1$. 

Furthermore, we claim that the infinity--harmonic potentials of $\overline{\Omega_1^{\pm}}$
relative to $\Omega$
are both solutions to $(P)_1$.
Let us prove this claim when $u$ is the infinity--harmonic potential of $\overline{\Omega_1^{-}}$.
By Proposition~\ref{p:pot} we have that
$u(x) = w_1(x)$ on the set
\[
A^- := \{x = (x_1, x_2)\in\Omega\setminus \overline{\Omega_1}:
\ x_1 < 2 \sqrt{2} - 4\}. 
\]
Hence, we already know that $u$ satisfies the free boundary condition of Definition~\ref{d:desilva}
at all points $x_0 \in F(u) = \partial\Omega_1^-$, $x_0 \neq p := (2\sqrt{2}-4, 0)$.
It remains to prove that the free boundary condition is satisfied at $p$.
Since $p$ has two projections 
$y_\pm := (2\sqrt{2}-4, \pm1)$
on $\partial\Omega$, it does not exist a smooth function $\varphi$
such that $u \prec_p \varphi^+$.
On the other hand, if $\varphi$ is a smooth function such that
$\varphi^+ \prec_p u$, then necessarily $|\nabla\varphi(p)| \leq 1$,
since $u(x)\leq \dist(x, \overline{\Omega_1^-})$.
This proves that $u$ is a solution to $(P)_1$.

One can also construct infinitely many other genuine solutions to $(P)_1$.
Specifically, let $q := -p$, let $C$ be a closed subset of 
$[p,q] \cup \overline{\Omega_1^+}$ with empty interior,
and let $K := C\cup\overline{\Omega_1^-}$.
Then the infinity--harmonic potential of $K$ relative to $\Omega$
turns out to be a solution to  $(P)_1$.  Another
symmetric family of genuine solutions can be constructed
by taking  $C$ a closed subset of 
$[p,q] \cup \overline{\Omega_1^-}$ with empty interior
and $K := C\cup\overline{\Omega_1^+}$.
(For both families, the free boundary condition can be checked by arguing with minor modifications
as done in the proof of Proposition \ref{p:trivial}).
\end{example}

\begin{example}[Multiplicity of genuine solutions without (H2)]\label{e:nonreg}

\begin{figure}
	\includegraphics{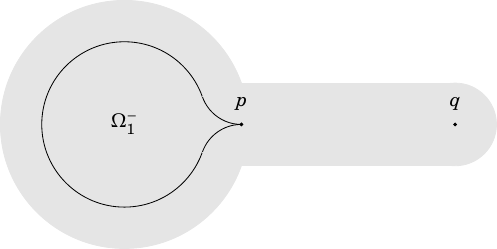}
	\caption{The set $\Omega$ of Example~\ref{e:nonreg} (grey) and $\partial\Omega_1$ (black)}\label{fig:nonreg}
\end{figure}

More than one genuine solution may occur
also in case $\overline{\Omega_\frac{1}{\lambda}}$ is strictly contained in  $\Omc{\frac{1}{\lambda}}$. To enlighten this fact, 
let us  modify the above example by 
considering the set
\[
\Omega := B_3((-4,0)) \cup B_1((4,0)) \cup ((-4,4)\times(-1,1)). 
\]
Again, we take $\lambda = 1$. In this case, 
$\{d \geq 1\} = \overline{\Omega_1} \cup [p,q] \neq \overline{\Omega_1}$,
with $p := (2\sqrt{2}-4, 0)$  and $q: = (4, 0)$.
Similarly as above, for every closed subset $C$ of the segment $[p,q]$,
the infinity--harmonic potential of $K := \overline{\Omega_1^-} \cup C$ relative to $\Omega$ is a
solution to \ref{f:P}.	
\end{example}

We now turn to the proof of Theorem \ref{t:unique}. It is based on the characterization of the set $\interior \{u=0\}$
(see Proposition~\ref{p:reg} below).
We start by proving a simple geometric lemma.

\begin{lemma}\label{l:K}
Let 
$A$ be a non-empty open subset of $\Omega$ such that, for some 
constant $R>0$,
\begin{equation}\label{f:A}
d(x) = \dist(x, \partial A) + R,
\qquad \forall x\in A.
\end{equation}
Then $A$ is a union of connected components of $\Omega_R$. 
In particular, if $\Omega_R$ is connected,
then $A = \Omega_R$.
\end{lemma}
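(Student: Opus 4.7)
The plan is to show that $A$ is simultaneously open and closed in the (open) set $\Omega_R$, from which the statement follows since the connected components of $\Omega_R$ are open, and if $\Omega_R$ is connected the non-empty clopen subset $A$ must equal the whole $\Omega_R$.

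First I would verify the inclusion $A\subset \Omega_R$. Since $A$ is open, for every $x\in A$ one has $\dist(x,\partial A)>0$, and hence the hypothesis \eqref{f:A} gives $d(x)>R$, i.e.\ $x\in\Omega_R$. Openness of $A$ in $\Omega_R$ is then automatic from the openness of $A$ in $\Omega$.

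Next I would show that $A$ is relatively closed in $\Omega_R$. Let $\{x_n\}\subset A$ with $x_n\to x\in\Omega_R$. Since the functions $y\mapsto d(y)$ and $y\mapsto \dist(y,\partial A)$ are continuous on $\R^n$, passing to the limit in \eqref{f:A} yields
\[
\dist(x,\partial A)=\lim_n\dist(x_n,\partial A)=\lim_n\bigl(d(x_n)-R\bigr)=d(x)-R>0,
\]
where the strict inequality uses $x\in\Omega_R$. In particular $x\notin\partial A$; on the other hand $x\in\overline{A}=A\cup\partial A$, so necessarily $x\in A$. Thus $A$ is closed in $\Omega_R$.

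Having established that $A$ is clopen in $\Omega_R$, and observing that the connected components of the open set $\Omega_R$ are themselves open, I conclude that $A$ is a union of connected components of $\Omega_R$. When $\Omega_R$ is connected, the non-emptiness of $A$ forces $A=\Omega_R$. I do not foresee a major obstacle here; the only subtle point is to resist working with $\partial A$ relative to $\Omega$ and to use instead the topological boundary in $\R^n$, which is what appears in the hypothesis \eqref{f:A} and what makes the continuity argument above go through.
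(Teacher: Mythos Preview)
Your proof is correct and follows essentially the same approach as the paper: both establish $A\subseteq\Omega_R$ and then show that $\partial A\cap\Omega_R=\emptyset$ (you via a sequential continuity argument giving that $A$ is relatively closed in $\Omega_R$, the paper via an $\varepsilon$-estimate showing $d\equiv R$ on $\partial A$, hence $\partial A\subseteq\partial\Omega_R$), after which the conclusion is the same connectedness reasoning. Your clopen packaging is marginally more streamlined, but the underlying idea is identical.
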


\begin{proof}
From \eqref{f:A} we have that $d(x) > R$ for every $x\in A$,
hence $A\subseteq\Omega_R$.

We claim that $\partial A \subseteq \partial\Omega_R$.
Specifically, let $y\in\partial A$. 
For every $\varepsilon>0$ there exists a point $x\in A$ such that
$|x-y| < \varepsilon$, so that, by \eqref{f:A},
\begin{gather*}
d(y) <
d(x) + \varepsilon = \dist(x,\partial A) + R + \varepsilon < R + 2\varepsilon,
\\
d(y) >
d(x) - \varepsilon = \dist(x,\partial A) + R - \varepsilon 
> R - 2\varepsilon,
\end{gather*}
hence $d(y) = R$, and the claim is proved.

Let $A'$ be a connected component of $A$, and let $B$ a connected component of $\Omega_R$
such $A' \cap B \neq \emptyset$.
By the previous claim, $\partial A'\cap B = \emptyset$, hence $B$ can be written as the
union of the two open sets $A'$ and $B\setminus\overline{A'}$.
Since $B$ is connected, then necessarily
$B\setminus\overline{A'} = \emptyset$ and $A'=B$.
\end{proof}

\begin{proposition}\label{p:reg}
	Let $\lambda > 1/\inradius$ and let $u$ be a solution to \ref{f:P}.
	Then $\interior \{u = 0\}$ is a 
	(possibly empty) union of connected components of $\Omega_{\frac{1}{\lambda}}$.
\end{proposition}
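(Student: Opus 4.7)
The plan is to apply Lemma~\ref{l:K} to the open set $A := \interior\{u=0\}$ with $R := 1/\lambda$. If $A = \emptyset$ the claim is trivial, so I shall assume $A \neq \emptyset$. Two preliminary facts will be used throughout: (i) $A \subseteq \Omega_{1/\lambda}$, since by Proposition~\ref{c:gradbound} one has $\{u=0\} \subseteq \{d \geq 1/\lambda\}$ and $A$ is open; (ii) $\partial A \subseteq F(u)$, because any $y\in\partial A$ is a limit of points of $A\subseteq\{u=0\}$ (so $u(y)=0$ by continuity) but $y\notin A$, and therefore every neighbourhood of $y$ meets $\Omega^+(u)$. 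Combining (ii) with Proposition~\ref{c:gradbound} gives $\partial A \subseteq \{d\geq 1/\lambda\}$.

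To apply Lemma~\ref{l:K} one must verify the identity $d(x) = \dist(x,\partial A) + 1/\lambda$ for every $x\in A$. The inequality ``$\geq$'' follows from a standard segment argument: picking $z\in\partial\Omega$ achieving $d(x)=|x-z|$, the segment $[x,z]$ exits $A$ at a first point $y\in\partial A$, and then
\[
d(x) = |x-y|+|y-z| \;\geq\; \dist(x,\partial A) + d(y) \;\geq\; \dist(x,\partial A)+\tfrac{1}{\lambda},
\]
using $d(y)\geq 1/\lambda$ on $\partial A$. By an analogous triangle-inequality argument, the reverse inequality reduces to the sharper statement that $d(y)=1/\lambda$ for every $y\in\partial A$, i.e.\ that $d(y)\leq 1/\lambda$ there: indeed, once this is available, given $x\in A$, nearest $y\in\partial A$ to $x$ and nearest $z\in\partial\Omega$ to $y$ satisfy $|y-z|=1/\lambda$, whence $d(x)\leq |x-y|+|y-z|=\dist(x,\partial A)+1/\lambda$.

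Proving $d(y)\leq 1/\lambda$ on $\partial A$ is the main obstacle I foresee. My strategy is: given $y\in\partial A$, pick a sequence $y_n \in A$ with $y_n\to y$ and consider the maximal inscribed balls $B_{\rho_n}(y_n)\subseteq A$, with $\rho_n := \dist(y_n,\partial A)\to 0$; each lies in $\{u=0\}$ and is tangent to $\partial A\subseteq F(u)$ at a point $\xi_n\to y$. The tangent-ball comparison used in the proof of Proposition~\ref{c:gradbound}, applied locally at $\xi_n$ and combined with the viscosity condition (c2) and the super-solution bound $u\le \lambda\dist(\cdot,F(u))$ (which follows from the infinity-superharmonicity of $\lambda\dist(\cdot,F(u))$ and its boundary values), should deliver the local slope information that, together with the identification $u=w_{\{u=0\}}$ (by uniqueness of infinity-harmonic extensions) and the affine behaviour of $u$ along the short rays of length $1/\lambda$ given by Proposition~\ref{p:pot}, forces $d(\xi_n)\leq 1/\lambda$. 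Passing to the limit in $n$ by continuity of $d$ yields $d(y)\leq 1/\lambda$, completing the verification of the hypothesis of Lemma~\ref{l:K} and thereby the proposition.
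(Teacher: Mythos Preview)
Your framework is correct: apply Lemma~\ref{l:K} to $A=\interior\{u=0\}$ with $R=1/\lambda$. Your proof of the inequality $d(x)\geq\dist(x,\partial A)+1/\lambda$ via the exit-point of the segment to $\partial\Omega$ is fine, and your reduction of the reverse inequality to the claim $d\leq 1/\lambda$ on $\partial A$ is also valid.

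The gap is in your argument for $d(y)\leq 1/\lambda$ on $\partial A$. The tools you invoke do not deliver this. The barrier $u\leq\lambda\,\dist(\cdot,F(u))$, combined with an interior tangent ball at $\xi_n$ and condition~(c2), produces a test function $\varphi^+$ touching $u$ from above at $\xi_n$ with $|\nabla\varphi(\xi_n)|=\lambda$; condition~(c2) then yields only $\lambda\geq\lambda$, which is vacuous. Proposition~\ref{p:pot} gives affine behaviour only along segments realizing the \emph{global} distance $\dist(\partial\Omega,K)$, so it says nothing at a generic boundary point $y\in\partial A$. In short, none of the listed ingredients encode the quantity $d(\xi_n)$, so there is no mechanism to conclude $d(\xi_n)\leq 1/\lambda$.

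The paper closes this gap by a direct cone-comparison that ties $d(x)$ to $\dist(x,\partial A)$ without passing through $\partial A$. Fix $x\in A$, let $x_0\in\partial A$ with $r:=|x-x_0|=\dist(x,\partial A)$, and set
\[
\varphi(y):=\frac{|y-x|-r}{d(x)-r}\,.
\]
Then $\varphi\geq 0$ on $\partial A\subseteq F(u)$ (since $|y-x|\geq r$ there) and $\varphi\geq 1$ on $\partial\Omega$ (since $|y-x|\geq d(x)$ there), while $\varphi$ is infinity-superharmonic away from $x$. Comparison in $\Omega\setminus\overline A$ gives $u\leq\varphi$, hence $u\prec_{x_0}\varphi^+$ with $|\nabla\varphi(x_0)|=1/(d(x)-r)\neq 0$. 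Condition~(c2) now yields $1/(d(x)-r)\geq\lambda$, i.e.\ $d(x)-\dist(x,\partial A)\leq 1/\lambda$, which is exactly the missing inequality. No limiting procedure along $y_n\to y$ is needed.
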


\begin{proof}
	We are going
	to prove that, if the set $A := \interior \{u = 0\}$ is not empty, it satisfies the assumption \eqref{f:A} of Lemma~\ref{l:K}
	with $R = \frac{1}{\lambda}$.
	
	Let $x\in A$, let $x_0\in\Pi_{\partial A}(x)$
	and let $r := |x-x_0|$, so that $B_r(x)\subset A$ and $x_0\in F(u)$.
	Let us consider the function
	\[
	\varphi(y) := \frac{|y-x|-r}{d(x) - r},
	\qquad
	y\in \Omega.
	\]
	We have that $\varphi(y) \geq 0$ for every $y\in \partial A \subseteq F(u)$,
	and
	$\varphi(y) \geq 1$ for every $y\in\partial\Omega$.
	Hence, by comparison, $\varphi\geq u$ in $\Omega\setminus A$ and,
	in particular, $u \prec_{x_0} \varphi^+$.
	By Definition~\ref{d:desilva}, it follows that
	\begin{equation}\label{f:dis1}
	|\nabla\varphi(x_0)| = 
	\frac{1}{d(x) - r} \geq \lambda.
	\end{equation}
	Let $z\in\Pi_{\partial\Omega}(x)$.
	The point $y_0 := x + r \, \frac{z-x}{|x-z|}$ belongs to $\overline{B}_r(x)\subset\overline{A}$
	and, by Proposition~\ref{c:gradbound},
	$\overline{B}_r(x) \subseteq \overline{A} \subseteq \overline{\Omega}_{\frac{1}{\lambda}}$,
	so that 
	\begin{equation}\label{f:dis2}
	\frac{1}{\lambda} \leq d(y_0) = |y_0 - z| = |x-z| - |x-y_0|
	= d(x) - r \,.
	\end{equation}
	From \eqref{f:dis1} and \eqref{f:dis2} it follows that
	$d(x) - r = \frac{1}{\lambda}$,
	i.e.\ the assumptions of Lemma~\ref{l:K} hold with $R = \frac{1}{\lambda}$.
\end{proof}

\medskip
We are now in a position to give:

\medskip
{\it Proof of Theorem \ref{t:unique}}. 
	Let $u$ be a genuine solution to \ref{f:P}, for some $\lambda > 1/\inradius$. 
	
	Since by assumption  
	the interior of $\{u = 0\}$ is not empty, by Proposition \ref{p:reg} it is a union of connected components of  $\Omega_{\frac 1 \lambda}$ and hence, by assumption (H1), it agrees with $\Omega _{\frac 1 \lambda}$. 
		
	On the other hand, by Proposition~\ref{c:gradbound},  the closed set  $\{u=0\}$ is  contained in 
	$\Omc{\frac 1 \lambda}$ and, by assumption (H2), we have $\Omc{\frac 1 \lambda} = \overline{\Omega_{\frac 1 \lambda}}$.
	
Summarizing, we have 
$$ \Omega _{\frac 1 \lambda}  = \interior  \big ( \{ u = 0 \}  \big ) \subseteq \{ u = 0 \} \subseteq 
\Big \{ d \geq \frac{1}{\lambda} \Big \} 
= \overline{\Omega_{\frac 1 \lambda}}\,.$$

Hence, $\{u = 0\} = \overline{\Omega_{\frac{1}{\lambda}}}$ and $u = w_{\frac{1}{\lambda}}$.
\qed

\section{Characterization of solutions} 
\label{s:char}

In the following theorem we will characterize all solutions to \ref{f:P}
as the infinity-harmonic potentials of compact subsets of $\Omega$.

\begin{definition}\label{d:kl}
For a fixed $\lambda \geq 1/\inradius$,
let $\mathcal{K}_\lambda$ be the family of all non-empty sets $K\subset\R^n$ 
satisfying the following properties:
\begin{itemize}
	\item[(i)]
	$K$ is a compact subset of $\Omc{1/\lambda}$.
	\item[(ii)]
	If $\widetilde{K}$ is a connected component of $K$
	with non-empty interior,
	then $\interior \widetilde{K}$ coincides with a
	connected component of $\Omega_{1/\lambda}$.
	\item[(iii)] 
	If $\Omega\setminus K$ is decomposed as in Section~\ref{ss:potentials},
		then the set $I_0$ defined in \eqref{f:I0} is empty ({\it cf.} Lemma \ref{l:clarify} for equivalent conditions). 
	\end{itemize}	
\end{definition}

\begin{theorem}\label{t:solPl2}
	Let $\lambda \geq 1/\inradius$.
Then a function $u\in C(\overline{\Omega})$ is a non-constant 
solution to~\ref{f:P}
if and only if there exists a set $K\in \mathcal{K}_\lambda$ such that
$u = w_K$.
\end{theorem}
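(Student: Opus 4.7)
The theorem splits naturally into two implications. For the direction $(\Leftarrow)$, given $K\in\mathcal{K}_\lambda$, I verify that $u:=w_K$ satisfies Definition~\ref{d:desilva}. Conditions (a) and (b) are built into the definition of the infinity-harmonic potential, and by (iii) one has $F(w_K)=K\setminus\interior K$. At $x_0\in F(w_K)$, since $d(x_0)\geq 1/\lambda$, a cone comparison with $y\mapsto|y-x_0|/d(x_0)$ on each connected component of $\Omega\setminus K$ having $x_0$ in its closure yields $w_K(y)\leq\lambda|y-x_0|$ locally, whence (c1). For (c2) I split according to whether $x_0\in\overline{\interior K}$. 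If not, then $K$ has empty interior near $x_0$, so by (iii) $\{w_K>0\}$ is dense in a neighborhood, and any admissible $\varphi$ is forced to be non-negative there with $\varphi(x_0)=0$, making $\nabla\varphi(x_0)=0$ and (c2) vacuous; if yes, then (ii) pins $x_0$ to $\partial\Omega_{1/\lambda}^\alpha$ for a single component, so $d(x_0)=1/\lambda$ and Proposition~\ref{p:pot} makes $w_K$ affine of slope $\lambda$ on each segment $[x_0,y]$ with $y\in\Pi_{\partial\Omega}(x_0)$, so that the argument in the proof of Theorem~\ref{t:B}(a) applies without modification.

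For the direction $(\Rightarrow)$, given a solution $u$, I set $K:=\{u=0\}$. Property (i) is precisely Proposition~\ref{c:gradbound}, and uniqueness of the viscosity Dirichlet problem for the infinity Laplacian on each connected component of $\Omega\setminus K=\Omega^+(u)$ forces $u=w_K$ (both functions match the boundary data $0$ on $K$ and $1$ on $\partial\Omega$). Property (iii) then follows at once from $\{w_K=0\}=\{u=0\}=K$.

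The remaining step, (ii), is the heart of the matter. In the trivial case $\interior K=\emptyset$ it is vacuous, so assume otherwise: by Proposition~\ref{p:reg}, $\interior K=\bigsqcup_i\Omega_{1/\lambda}^{\alpha_i}$, and a short topological observation shows that for any connected component $\widetilde{K}$ of $K$ one has $\interior\widetilde{K}=\widetilde{K}\cap\interior K=\bigsqcup_{i\in J}\Omega_{1/\lambda}^{\alpha_i}$ for some index set $J$. The task reduces to showing $|J|=1$. My plan is to first replicate the cone argument of Proposition~\ref{p:reg} with $\widetilde{A}:=\interior\widetilde{K}$ in place of $A$ --- the boundary inclusion $\partial\widetilde{A}\subseteq\partial A\subseteq F(u)$ lets the same test-function comparison go through --- to obtain $d(x)=\dist(x,\partial\widetilde{A})+1/\lambda$ on $\widetilde{A}$, and then to invoke Lemma~\ref{l:K} to conclude that each connected component of $\widetilde{A}$ is a connected component of $\Omega_{1/\lambda}$. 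The main obstacle is upgrading this to $\widetilde{A}$ itself being a single component, which amounts to ruling out ``dumbbell'' configurations in which $\widetilde{K}$ joins several pieces of $\widetilde{A}$ through connecting tissue in $\{d=1/\lambda\}$; I would attempt this by combining the rigid affinity of $w_K$ along rays to $\partial\Omega$ (Proposition~\ref{p:pot}) with a direct contradiction argument at a tissue point, exploiting that the free-boundary condition and the continuity of $u$ together force the tissue to cohabit with at most one interior piece.
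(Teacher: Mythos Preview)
Your argument for $(\Leftarrow)$, and your verification of properties (i), (iii) and the identity $u=w_K$ in $(\Rightarrow)$, are correct and follow the paper's proof closely (the paper establishes (iii) first and then deduces $u=w_K$, while you reverse the order, but this is cosmetic). For (c2) at $x_0\in\overline{\interior K}$, the paper does not quite say ``without modification'': it records the analogue of \eqref{f:cfdist} with $\dist(\cdot,\partial B)$ in place of $\dist(\cdot,\partial\Omega_{1/\lambda})$, where $B$ is the component of $\interior K$ having $x_0$ in its closure --- but this is indeed routine.

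Where you diverge is in verifying (ii). You are right that Proposition~\ref{p:reg} literally gives only that $\interior K$ is a union of connected components of $\Omega_{1/\lambda}$, and that this does not a priori force $\interior\widetilde K$ to be a \emph{single} such component for each connected component $\widetilde K$ of $K$: two components of $\Omega_{1/\lambda}$ with touching closures, bridged inside $K$, would violate it. The paper glosses over this by simply writing that (ii) ``follows from Proposition~\ref{p:reg}''.

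However, your proposed remedy --- re-running the cone argument for $\widetilde A:=\interior\widetilde K$ and then attempting an ad hoc contradiction at a ``tissue point'' --- is unnecessary and, as you yourself concede, incomplete. The point you are missing is that the literal form of (ii) is never used in the $(\Leftarrow)$ direction: what is actually invoked there is only that each connected component $B$ of $\interior K$ is a connected component of $\Omega_{1/\lambda}$ (see the clause ``$B$ is a connected component of $\interior K$ (which thanks to property (ii)\dots is also a connected component of the open set $\Omega_{1/\lambda}$)''). This weaker statement \emph{is} an immediate consequence of Proposition~\ref{p:reg}, since if $\interior K=\bigsqcup_i\Omega_{1/\lambda}^{\alpha_i}$ then these open, connected, pairwise disjoint sets are precisely the connected components of $\interior K$. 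The theorem therefore holds verbatim with (ii) read in this weaker sense, and no dumbbell-exclusion argument is required.
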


\begin{proof}
The case $\lambda = 1/\inradius$ is trivial
(see Remark~\ref{r:inradius}),
so that we shall assume that $\lambda > 1/\inradius$.

\medskip
Let $u\in C(\overline{\Omega})$ be a solution to \ref{f:P}.
Let us prove that the set $K := \{u=0\}$ belongs to the class $\mathcal{K}_\lambda$ introduced in Definition \ref{d:kl},
and that $u = w_K$.

Condition (i) is satisfied 
by Proposition~\ref{c:gradbound}. 

Condition (ii) is clearly satisfied if $u$ is a non-genuine solution, while it  follows from Proposition~\ref{p:reg}
 if $u$ is a genuine solution. 

Condition (iii) can be easily checked arguing by contradiction. 
Specifically, assume that the set $I_0$ defined in \eqref{f:I0} is not empty.
In this case, there exists a connected component $A$ of $\Omega\setminus K$ such
that $\partial A\subset K$. But then, by uniqueness, necessarily $u=0$ on $A$, with $A\cap K=\emptyset$,
against the definition of $K$. 

We have thus proved that $K \in \mathcal K _\lambda$. Finally we observe that, since $K$ satisfies condition~(iii), we have $\{w_K = 0 \} = K$ 
(cf.\ Lemma~\ref{l:clarify}), and hence $u = w_K$.

\medskip
Vice versa,  let $K\in \mathcal{K}_\lambda$
and let us prove that $w_K$ is a solution to \ref{f:P}.

By property (iii) in Definition \ref{d:kl}, we have that $F(w_K) = \partial K$, 
hence it is enough to prove that
the free boundary condition is satisfied at any point of $\partial K$.
Let $x_0 \in \partial K$. 

We have two possibilities:
either $x_0 \not\in \overline{\interior K}$,
or $x_0 \in \partial  B$, where $B$ is a connected component of $ \interior K$ (which thanks to property (ii) in Definition \ref{d:kl} is also a connected component of  the open set $\Omega_{1/\lambda}$).

If $x_0 \not\in \overline{\interior K}$,
we are done by arguing exactly as in Proposition~\ref{p:trivial}
(in particular, by exploiting property (i) in Definition \ref{d:kl}).

If $x_0 \in \partial B$ , 
we argue as in the proof of Theorem~\ref{t:B}(a).
More precisely, we prove firstly that the following inequalities analogous to \eqref{f:cfdist} are satisfied:
\begin{equation}\label{f:cfdist2}
1 - \lambda\, d(x)
\leq w_K(x)
\leq \lambda\, \dist(x, \partial B),
\qquad
\forall x\in \overline{\Omega}\setminus K, 
\end{equation}
with equalities for every
$x\in [x_0, y_0]$, being $y_0 \in \Pi_{\partial\Omega}(x_0)$. 
Then, by using \eqref{f:cfdist2}, 
we  obtain the free boundary condition at $x_0$ by proceeding in the same way as in the second part of the proof
of Theorem~\ref{t:B}(a).  
\end{proof}

\section{Asymptotics of $p$-Bernoulli problems as $p\to +\infty$}

In this section we explore the relation between problem \ref{f:P} and the interior Bernoulli problem for the $p$-Laplacian. 
For the benefit of the reader, we start by revisiting in more detail some facts which in part have been already mentioned in the Introduction
(some bibliographical references already given  therein are skipped below). 
 
The  interior Bernoulli free boundary problem for the $p$-Laplacian, for a given $p>1$,
consists in finding a (non-constant) solution to
\begin{equation}\label{f:plapconv}
\begin{cases}
\Delta_p u = 0
& \text{ in } \Omega^+(u)\,, 
\\
u = 1 & \text{ on } \partial \Omega,
\\ 
|\nabla u| = \lambda    
& \text{ on } F(u)\,.
\end{cases}
\end{equation}

Then  the {\it Bernoulli constant for the $p$-Laplacian} is defined by
\[
\lp := \inf\{\lambda > 0:\ \text{\eqref{f:plapconv} admits a non-constant solution}\}.
\]

Here a solution to \eqref{f:plapconv} is meant as a function $u\in W ^ {1,p} _1 (\Omega)$ such that, according to \cite{DanPet}, 
the free boundary condition 
is satisfied in the following weak sense:
\begin{equation}\label{f:weakFB}
\lim_ {\e \to 0 } \int _ {\partial 
\{ u > \e \} \cap \Omega} (  |\nabla u | - \lambda )  \eta \cdot \nu  = 0 
\qquad \forall \eta \in W ^ {1, p } _0(\Omega; \R ^n)\,,
\end{equation}
where $\nu$ is the unit outward normal.

In particular,  when $\Omega$ is a regular convex domain, the following results due to Henrot and Shahgholian hold: 

\smallskip
$\bullet$ 
for every $\lambda \geq \lp$, problem \eqref{f:plapconv}
	admits a classical non-constant solution $u\in C(\overline{\Omega^+(u)}) \cap C^2(\Omega^+(u))$, which has convex level sets; moreover,  the free boundary $F(u)$ is of class $C^{2,\alpha}$ \cite[Thm.~2.1]{HS}, and the free boundary condition is satisfied in the pointwise sense  
\[
\lim_{\Omega^+(u) \ni y \to x}
|\nabla u(y)| = \lambda \qquad \forall x \in F(u)\,.
\]

\smallskip

$\bullet$ $\lp$ can be characterized, loosely speaking, as the infimum of positive $\lambda$ such that the family of sub-solutions
to \eqref{f:plapconv} is not empty, and it satisfies the lower bound 
\begin{equation}\label{lblp}
\lp \geq 1/\inradius
\end{equation} \cite[Thms.~3.1 and 3.2]{HeSh2}. 

\medskip

\medskip
When $\Omega$ is an arbitrary domain, not necessarily convex, following the celebrated work   \cite{AlCa} by Alt and Caffarelli, 
in order to find solutions to problem \eqref{f:plapconv} 
one can consider the integral functionals
\[
J ^{\lambda } _p  (u) := \frac{1}{p} \int_\Omega \left(\frac{|\nabla u|}{\lambda} \right)^p + \frac{p-1}{p} \,  \big | \{ u >0 \}   \big | 
\]
and look for minimizers to 
\begin{equation}\label{f:minp}
\min \Big \{ J ^{\lambda} _p (u)  \ :\ u \in W ^ {1,p}_1 (\Omega) \Big \} \,,
\qquad
W ^ {1,p}_1 (\Omega) := 1 + W ^ {1,p}_0 (\Omega).
\end{equation} 
Accordingly, the constant
\[
\Lp := \inf\{\lambda > 0:\ \text{\eqref{f:minp} admits a non-constant solution}\}, 
\]
can be regarded as a \textit{variational Bernoulli constant for the $p$-Laplacian}.  
We have that:

\smallskip
$\bullet$ 
For every $\lambda \geq \Lp$, problem \eqref{f:minp}
	admits a non-constant minimizer $u$  (see \cite[Thm.~1.1]{DaKa2010}); such minimizer turns out to be a solution to the Bernoulli problem
	\eqref{f:plapconv}, provided the free boundary condition 
$|\nabla u| = \lambda$ is understood in the weak sense \eqref{f:weakFB} \cite[Thm.~2.1]{DanPet}; moreover, the free boundary $F(u)$ is a locally analytic hyper-surface, except for a $\mathcal H ^{n-1}$-negligible singular set \cite[Cor.~9.2]{DanPet}. 
	
	\smallskip
$\bullet$ As a consequence of the results recalled at the above item,
we have that 
\begin{equation}\label{f:Ll}
\Lp \geq \lp\,;
\end{equation}
 this inequality may be strict, as
 the explicit computation of both constants $\Lp$ and $\lp$ in case of the ball reveals \cite[Section 4]{DaKa2010}.

\medskip

Being this  a quick picture of the state of the art, 
in the light of the results proved in the previous sections for problem \ref{f:P}, it
is natural to ask:

\smallskip
{\it What is the asymptotics of the Bernoulli constants $\lp$ and $\Lp$ as $p \to + \infty$?  Further, 
if for a fixed $\lambda$ and $p$ large enough
there exists a non-constant solution $u_p$ to \eqref{f:plapconv}, 
what is the limiting behaviour of $u _p$ as $p \to + \infty$?}

\medskip
Regarding the asymptotics of the  Bernoulli constants $\lp$ and their variational counterparts $\Lp$,   we have: 

\begin{proposition}\label{p:convl}
Let $\Omega\subset\R^n$ be a bounded open domain. 
Then
\[
\limsup_{p \to + \infty} \lp
\leq
\limsup_{p \to + \infty} \Lp  \leq  1/ \inradius\,.
\] 
\end{proposition}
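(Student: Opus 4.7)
The first inequality $\limsup_p \lp \leq \limsup_p \Lp$ is immediate from \eqref{f:Ll}, so the core of the proof is the bound $\limsup_p \Lp \leq 1/\inradius$. I would prove this by exhibiting, for each $\lambda>1/\inradius$, an explicit non-constant competitor in $W^{1,p}_1(\Omega)$ whose energy $J^\lambda_p$ beats that of the constant function $u \equiv 1$ once $p$ is large enough; this forces $\Lp \leq \lambda$ for all large $p$, and then letting $\lambda \downarrow 1/\inradius$ yields the claim.

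Fix $\lambda > 1/\inradius$ and set $r := 1/\lambda < \inradius$, so that $\Omega_r$ is a non-empty open subset of $\Omega$ containing a ball of radius $\inradius - r$. Define
\[
u(x) := \min\bigl\{1,\ \lambda\, \dist(x, \overline{\Omega_r})\bigr\}, \qquad x \in \overline{\Omega}.
\]
Then $u$ is Lipschitz with $|\nabla u| \leq \lambda$ a.e.; moreover, for any $y \in \partial\Omega$ and any $x \in \overline{\Omega_r}$ one has $|y-x| \geq d(x) \geq r = 1/\lambda$, whence $\dist(y, \overline{\Omega_r}) \geq 1/\lambda$ and $u \equiv 1$ on $\partial\Omega$. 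Thus $u \in W^{1,p}_1(\Omega)$ is admissible for \eqref{f:minp}.

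Now I would compute $J^\lambda_p(u)$. On the set $\{0 < u < 1\}$ we have $|\nabla u|/\lambda = 1$, while $|\nabla u| = 0$ on $\{u = 1\}$; since $\{u > 0\} = \Omega \setminus \overline{\Omega_r}$, the bound $|\{0 < u < 1\}| \leq |\Omega|$ gives
\[
J^\lambda_p(u) \leq \tfrac{1}{p}|\Omega| + \tfrac{p-1}{p}\bigl(|\Omega| - |\overline{\Omega_r}|\bigr) = |\Omega| - \tfrac{p-1}{p}|\overline{\Omega_r}|.
\]
Comparing with $J^\lambda_p(1) = \tfrac{p-1}{p}|\Omega|$, an elementary manipulation shows that $J^\lambda_p(u) < J^\lambda_p(1)$ is equivalent to $|\Omega|/(p-1) < |\overline{\Omega_r}|$, which holds for every $p$ large enough because $|\overline{\Omega_r}| > 0$.

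For such $p$ the constant $1$ cannot minimize $J^\lambda_p$ over $W^{1,p}_1(\Omega)$, so \eqref{f:minp} admits a non-constant minimizer, hence $\Lp \leq \lambda$. Therefore $\limsup_{p \to +\infty} \Lp \leq \lambda$ for every $\lambda > 1/\inradius$, and letting $\lambda \downarrow 1/\inradius$ completes the proof. There is no real obstacle here beyond choosing the right competitor; the main point is that $\overline{\Omega_r}$ has positive measure as soon as $r < \inradius$, which is exactly the reason $1/\inradius$ is the correct threshold.
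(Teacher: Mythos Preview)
Your proof is correct and follows essentially the same approach as the paper: fix $\lambda>1/\inradius$, exhibit an explicit Lipschitz competitor that beats the constant $1$ for all large $p$, and conclude $\Lp\leq\lambda$. The only difference is the choice of competitor: the paper uses $v_\lambda=(1-\lambda d)_+$, for which $|\nabla v_\lambda|/\lambda=1$ exactly on $\{v_\lambda>0\}=D_{1/\lambda}$, yielding the clean identity $J^\lambda_p(v_\lambda)=|D_{1/\lambda}|$; you use $\min\{1,\lambda\,\dist(\cdot,\overline{\Omega_r})\}$, which may equal $1$ on a set of positive measure inside $\Omega$ (near corners, for instance), forcing you to bound $|\{0<u<1\}|\leq|\Omega|$ rather than compute exactly. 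Both routes lead to the same comparison with $J^\lambda_p(1)=\frac{p-1}{p}|\Omega|$ and the same conclusion, so the distinction is cosmetic.
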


\begin{proof}
In view of the inequality \eqref{f:Ll}, it is enough to prove that
\[
\limsup_{p \to + \infty} \Lp   \leq 1/ \inradius\,.
\] 
To obtain this inequality we observe that, if we fix $\l > 1/\inradius$, for $p$ large enough problem~\eqref{f:minp} admits a non-constant minimizer. Indeed, setting $v _\l := ( 1- \l d) _+$, for $p\gg 1$ we have
\[
J ^\lambda _p (v_\l ) = \frac{1}{p}  |D_{1/\l} |  + \frac{p-1}{p}   |D_{1/\l}| 
=  |D_{1/\l} | < \frac{p-1}{p}  |\Omega | = J _ p (1)\,.
\qedhere
\] 
\end{proof}

\begin{corollary}\label{c:convl}
Assume that $\Omega$ is convex with $\partial \Omega$ of class $C ^1$. Then
\[
\lim _{p \to + \infty} \lp = \lim _{p \to + \infty} \Lp   = 1/ \inradius\,.
\] 
\end{corollary}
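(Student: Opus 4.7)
\medskip

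The plan is to combine the one-sided bounds already stated in the excerpt. On the one hand, Proposition~\ref{p:convl} provides, with no regularity assumption on $\Omega$, the upper bound
$$\limsup_{p \to +\infty} \Lp \leq \frac{1}{\inradius},$$
and since \eqref{f:Ll} gives $\lp \leq \Lp$ for every $p$, the same upper bound holds for $\lp$. On the other hand, the Henrot--Shahgholian lower bound \eqref{lblp} says that $\lp \geq 1/\inradius$ for every $p > 1$, precisely under the standing assumption that $\Omega$ is convex with $C^{1}$ boundary; another application of \eqref{f:Ll} then transfers this lower bound to $\Lp$, giving $\Lp \geq \lp \geq 1/\inradius$. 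Taking $\liminf$ in $p$ of both chains and comparing with the $\limsup$ estimate forces both $\lp$ and $\Lp$ to converge to $1/\inradius$.

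There is essentially no obstacle in this argument: the corollary is genuinely a corollary, with the real work done in Proposition~\ref{p:convl} (upper bound) and in the classical Henrot--Shahgholian inequality \eqref{lblp} (lower bound). The role of the convexity and $C^{1}$-regularity hypotheses on $\Omega$ is exclusively to justify the invocation of \eqref{lblp}; the upper bound obtained in Proposition~\ref{p:convl} itself is valid on an arbitrary bounded open domain, which is why, in spirit, the only non-trivial ingredient of the proof is the existing $p$-Laplacian literature.
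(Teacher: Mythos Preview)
Your argument is correct and matches the paper's own proof, which consists of the single line ``It follows from \eqref{lblp} and Proposition~\ref{p:convl}.'' You have simply spelled out the two one-sided inequalities and the use of \eqref{f:Ll} to pass between $\lp$ and $\Lp$, which is exactly the intended mechanism.
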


\begin{proof}
It follows from \eqref{lblp} and Proposition \ref{p:convl}.
\end{proof}

\bigskip
Now, let $\Omega$ be convex and let  $\l > 1/\inradius$.   
By  Proposition \ref{p:convl}, for $p$ large enough there exists a non-constant solution $u _p$ to 
\eqref{f:plapconv}. Moreover, by Corollary \ref{c:convex}, problem \ref{f:P}  admits a unique solution given precisely by the infinity harmonic potential $w _{\frac{1}{\lambda}}$ of $\overline{\Omega _{\frac{1}{\lambda}}}$. 
Nevertheless,  we cannot expect that, in general, $u _p$ converge to $w _{\frac{1}{\lambda}}$ as $p \to + \infty$. To enlighten this fact and get a feeling of the situation, let us have a look at what happens
when $\Omega$ is a ball.

\begin{example}[The radial case]
Let $B_R$ be the ball of center $0$ and radius $R$ in $\R ^n$, and let $\l > 1/R$. 
It is well-known that for $\lambda = \lambda _p ( B _R)$ the Bernoulli problem \eqref{f:plapconv} on $B _R$ admits a unique solution, which is called parabolic, whereas for any $\lambda > \lambda _p ( B _R)$ it admits two solutions, which are called  hyperbolic and elliptic (as they are respectively decreasing and increasing with respect to the parameter $\lambda$).  

Since we want to examine the asymptotic behaviour of these solutions as $p \to + \infty$, let us briefly recover their expressions.  
By a result of Reichel \cite{reichel}, a solution to problem  \eqref{f:plapconv} on $B _R$ is necessarily radial. 
 Hence,  for $\rho\in (0, R)$ and $p > n$, we compute the $p$-harmonic function $u_p$ in $B_R\setminus\overline{B}_\rho$
which satisfies the Dirichlet boundary conditions $u_p = 1$ on $\partial B_R$ and $u_p = 0$ on $\partial B_\rho$. It is given by
\begin{equation}\label{f:up}
u_p(x) = \frac{|x|^\alpha - \rho^\alpha}{R^\alpha - \rho^\alpha}\,,
\quad \rho < |x| < R,
\qquad
\alpha := \frac{p-n}{p-1}
\end{equation}
(observe that, for $p>n$, the exponent $\alpha$ belongs to $(0, 1)$, and tends to  $1$ as $p\to +\infty$).

We are interested in finding the values of $\rho\in (0,R)$ such that
\begin{equation}\label{f:pbd}
|\nabla u_p(x)| =  \lambda,
\qquad \text{for}\ |x| = \rho\,.
\end{equation}
Since 
$
|\nabla u_p(x)| = \alpha\, \frac{|x|^{\alpha-1}}{R^\alpha - \rho^\alpha}
$, condition \eqref{f:pbd} is equivalent to 
\begin{equation}\label{f:zerof}
f_\alpha(\rho) := \lambda\, \rho^\alpha + \alpha\, \rho^{\alpha -1} - \lambda\, R^\alpha = 0.
\end{equation}
It is immediate to check that $f_\alpha$ is strictly decreasing in 
$\left(0, \frac{1-\alpha}{\rho}\right)$
and strictly increasing in $\left(\frac{1-\alpha}{\rho}, R\right)$, so that 
\[
m_\alpha := \min_{(0, R)} f_\alpha
= f_\alpha\left(\frac{1-\alpha}{\rho}\right)
= \left(\frac{\lambda}{1-\alpha}\right)^{1-\alpha} - \lambda\, R^\alpha.
\]
Moreover,
\[
\lim_{\rho\to 0+} f_\alpha(\rho) = +\infty,
\qquad
f_\alpha(R) = \alpha\, R^{\alpha -1} > 0.
\]
Hence, equation \eqref{f:zerof} has one solution
if $m_\alpha = 0$, two solutions if $m_\alpha < 0$,
no solutions if $m_\alpha > 0$.
Observe that
\[
m_\alpha \leq 0
\quad\Longleftrightarrow\quad
\lambda \geq \lambda_p (B_R):= \frac{1}{R}\left(1-\alpha\right)^{1-1/\alpha}
= \frac{1}{R} \left(\frac{n-1}{p-1}\right)^{-(n-1)/(p-n)}\,.
\]
In particular, for $p$ large enough, 
since $\lim_{\alpha\to 1^-} m_\alpha = 1 - \lambda\, R < 0$,
equation \eqref{f:zerof}  
has exactly two zeros
$\rho '_\alpha$ and $\rho''_\alpha$; correspondingly, 
the sets $\partial B _{\rho'_\alpha}$ and $\partial B _{\rho''  _\alpha}$ are  the free boundaries of the so-called hyperbolic and 
elliptic solutions to~\eqref{f:plapconv}.

Let us look at what happens as $p\to + \infty$. We know from the above computations that 
\[
0 < \rho' _\alpha < \frac{1-\alpha}{\lambda} < \rho'' _\alpha < R.
\]
This gives at once 
$\rho'_\alpha \to 0$ as $\alpha\to 1^-$. On the other hand it is easily seen that, for every
$0 < \varepsilon < \min\{ 1/\lambda, R - 1/\lambda\}$, it holds
\[
\lim_{\alpha\to 1-} f_\alpha\left(R - \frac{1}{\lambda}-\varepsilon\right) = -\varepsilon\, \lambda < 0,
\qquad
\lim_{\alpha\to 1-} f_\alpha\left(R - \frac{1}{\lambda}+\varepsilon\right) = \varepsilon\, \lambda > 0,
\]
so that
$\rho'' _\alpha \to R - \frac{1}{\lambda}$ 
as $\alpha\to 1^-$. 

Summarizing, the above analysis shows that the two families of $p$-harmonic functions which 
fit the Bernoulli free boundary condition 
\eqref{f:pbd} for
$\rho=\rho'_p$ and $\rho=\rho''_p$ have respectively the following asymptotic behaviour:   their
free boundary degenerates or converge to the set $\partial \Omega _{\frac{1}{\l}}$, i.e., 
\[
\rho'_p \to 0 \, , \qquad \quad 
\rho''_p \to R - \frac{1}{\lambda}\,,
\]
and the functions $u _p$, as given by \eqref{f:up},  converge  to
\[
w_R(x) = \frac{|x|}{R}\,,
\quad x\in \overline{B}_R,
\qquad \quad
w_{\frac 1\lambda}(x) = 1 - \lambda(R-|x|),
\quad x\in \overline{B}_R\setminus B_{R- \frac{1}{\lambda}}.
\]
In particular, only the elliptic family converges to the unique solution of \ref{f:P}. Let us remark that, for $\lambda \geq \L _p ( B _R)$,  
contrary to the hyperbolic solutions, 
the elliptic ones are variational, namely 
they  solve the minimization problem \eqref{f:minp} on $B _R$ (see \cite[Sec.~5.3]{FR97}, \cite[Sec.~4]{DaKa2010}).

\end{example} 

\bigskip

Now, as suggested by the example of the ball, we give a convergence result for variational solutions. 
The reader may find a similar $\Gamma$-convergence result in the paper \cite{KawSha}, where the authors deal with the asymptotic behaviour of variational energies related to 
the exterior Bernoulli boundary problem  for the $p$-Laplacian as $p \to + \infty$.

\begin{lemma}
\label{l:monot}

(i) 
For every function $u$ belonging to the space 
\[
\Vspace(\Omega) :=
\left\{
u \in C(\overline{\Omega}) \cap W^{1,\infty}(\Omega):\
u = 1\ \text{on}\ \partial\Omega
\right\}\,.
\]
the map $p \mapsto  J^\lambda_p (u)$ is monotone nondecreasing.

(ii) In the limit as $p \to + \infty$, the sequence
$(J^\lambda_p)_p$ $\Gamma$-converges, with respect to the weak topology of $W^{1,q}(\Omega)$, 
to  the functional  functional $J _\infty$ given by
\begin{equation}\label{f:jinf}
J _\infty (u) :=
\begin{cases}
   |\{u>0\}|,& \text{if}\ \|\nabla u\|_\infty \leq \lambda,\\
+\infty, &\text{otherwise}.
\end{cases}
\end{equation}
\end{lemma}

\begin{proof} 
The first part of the statement can be found in  \cite[Proposition 1]{KawSha}, but we enclose a short proof for the sake of completeness. 
 If $1<p\leq  q $, by applying Young's inequality 
$AB \leq (A^r/r ) + ( B ^ s/s)$, with $A = |\nabla u| /\l$, $B = 1$, $r = q/p$ and $s = r/ (r-1)$, we obtain 
\[
J^\lambda_p (u) \leq
\frac{1}{q}\int_\Omega \left(\frac{|\nabla u|}{\lambda}\right)^q\,dx
+ \left(\frac{q-p}{pq} + \frac{p-1}{p}\right)
\, |\{u > 0\}| =
J^\lambda_q (u)\,.
\]

The second part of the statement follows from the first one: it is enough to observe that
the functional $J _\infty$ is the ``pointwise'' limit of $J ^ \lambda _p$,  and 
apply a well-known property of $\Gamma$-convergence (see
\cite[Remark~1.40(ii)]{Brai}).
\end{proof}

\begin{theorem}\label{t:convp}
Let $\l > 1/\inradius$. 
For $p$ large enough, let $u _{\l, p }  $ be a solution to the $p$-Bernoulli problem ~\eqref{f:plapconv} 
which is found by solving the minimization problem~\eqref{f:minp}. 
Then, there exists an increasing sequence $(p_j)$, diverging
to $+\infty$,
such that
\[
u_{\l,p_j}   \rightharpoonup 
u_\infty\
\text{weakly in}\ W^{1,q}(\Omega) \quad \forall q>1\, , 
\qquad
u_{\l, p_j}  \to
u_\infty\
\text{uniformly in}\ \overline{\Omega}\,,
\]
where 
$u_\infty$ is a solution
of the $\infty$-Bernoulli problem \ref{f:P}
satisfying 
\[
\interior\{u_\infty = 0\} = \{d > 1/\lambda\}.
\]
\end{theorem}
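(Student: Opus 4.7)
I plan to proceed as follows. The starting point is that $v_\lambda := (1-\lambda d)^+$ is an admissible competitor with
\[
J^\lambda_p(v_\lambda) = \tfrac{1}{p}|D_{1/\lambda}| + \tfrac{p-1}{p}|D_{1/\lambda}| = |D_{1/\lambda}|,
\]
so by minimality $J^\lambda_p(u_{\lambda,p}) \leq |D_{1/\lambda}|$ for every $p$ large enough, and a standard truncation lets us also assume $0 \leq u_{\lambda,p} \leq 1$. This yields
\[
\int_\Omega \Big(\tfrac{|\nabla u_{\lambda,p}|}{\lambda}\Big)^p \leq p\,|D_{1/\lambda}|
\quad\text{and}\quad
|\{u_{\lambda,p}>0\}| \leq \tfrac{p}{p-1}|D_{1/\lambda}|.
\]

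\textbf{Compactness, gradient bound, and identification of the positivity set.} H\"older's inequality gives
$\|\nabla u_{\lambda,p}\|_q \leq \lambda (p|D_{1/\lambda}|)^{1/p}|\Omega|^{1/q-1/p}$ for every $q>1$ and $p\geq q$. A diagonal extraction produces an increasing sequence $p_j\to+\infty$ and a function $u_\infty$ such that $u_{\lambda,p_j} \rightharpoonup u_\infty$ weakly in $W^{1,q}$ for every $q>1$; by the compact embedding $W^{1,q}\hookrightarrow C(\overline\Omega)$ (for $q>n$) the convergence is uniform on $\overline\Omega$. Weak lower semicontinuity of $\|\cdot\|_q$ and letting $q\to+\infty$ yield $\|\nabla u_\infty\|_\infty \leq \lambda$, i.e.\ $u_\infty$ is $\lambda$-Lipschitz on $\overline\Omega$. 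Since $u_\infty\equiv 1$ on $\partial\Omega$, it follows that $u_\infty \geq v_\lambda$ and hence $\{u_\infty>0\}\supseteq D_{1/\lambda}$; conversely, uniform convergence gives $|\{u_\infty>0\}| \leq \liminf_j|\{u_{\lambda,p_j}>0\}| \leq |D_{1/\lambda}|$, so $|\{u_\infty>0\}| = |D_{1/\lambda}|$ and $u_\infty = 0$ almost everywhere on $\overline{\Omega_{1/\lambda}}$. Using continuity of $u_\infty$: for each $x\in\Omega_{1/\lambda}$ a small ball $B_r(x)\subset\Omega_{1/\lambda}\subset\overline{\Omega_{1/\lambda}}$ contains $u_\infty=0$ a.e.\, hence $u_\infty(x)=0$; points of $\partial\Omega_{1/\lambda}\cap\Omega$ are treated by taking limits from $\Omega_{1/\lambda}$. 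Thus $u_\infty\equiv 0$ on $\overline{\Omega_{1/\lambda}}$ and $\{u_\infty>0\} = D_{1/\lambda}$ as sets.

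\textbf{Infinity harmonicity and conclusion.} For every $U\Subset D_{1/\lambda}$, uniform convergence and positivity of $u_\infty$ on $\overline U$ force $u_{\lambda,p_j}>0$ on $U$ for $j$ large, so $u_{\lambda,p_j}$ is $p_j$-harmonic on $U$. The classical stability result on passing from $p$-harmonic to infinity harmonic functions under uniform limits then shows $u_\infty$ is infinity harmonic in $D_{1/\lambda}$. Combined with $u_\infty=1$ on $\partial\Omega$ and $u_\infty=0$ on $\overline{\Omega_{1/\lambda}}$, uniqueness for the infinity-harmonic Dirichlet problem identifies $u_\infty = w_{1/\lambda}$ (Definition~\ref{d:w}), so by Theorem~\ref{t:B}(a) $u_\infty$ is a non-trivial solution to \ref{f:P}. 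The inclusion $\Omega_{1/\lambda}\subseteq\interior\{u_\infty=0\}$ is immediate from $u_\infty\equiv 0$ on the open set $\Omega_{1/\lambda}$, while Proposition~\ref{p:reg} constrains $\interior\{u_\infty=0\}$ to be a union of connected components of $\Omega_{1/\lambda}$, giving the reverse inclusion and hence $\interior\{u_\infty=0\}=\Omega_{1/\lambda}$. I expect the main obstacle to be the upgrade from a.e.\ to pointwise vanishing of $u_\infty$ on $\overline{\Omega_{1/\lambda}}$: this set-theoretic (as opposed to merely measure-theoretic) identification $\{u_\infty>0\}=D_{1/\lambda}$ is exactly what enables the invocation of uniqueness to get $u_\infty=w_{1/\lambda}$, which in turn is what lets Theorem~\ref{t:B}(a) and Proposition~\ref{p:reg} close the argument.
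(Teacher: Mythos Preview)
Your compactness argument, the gradient bound $\|\nabla u_\infty\|_\infty\le\lambda$, the measure identity $|\{u_\infty>0\}|=|D_{1/\lambda}|$, and the passage from $p$-harmonic to infinity harmonic are all fine and match the paper's proof. The gap is in the identification $u_\infty=w_{1/\lambda}$.

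From $u_\infty\ge v_\lambda=(1-\lambda d)^+$ you only get $\{u_\infty>0\}\supseteq\{d<1/\lambda\}$, not $\{u_\infty>0\}\supseteq D_{1/\lambda}$. Recall $D_{1/\lambda}=\Omega\setminus\overline{\Omega_{1/\lambda}}$, and this coincides with $\{d<1/\lambda\}$ precisely when $\overline{\Omega_{1/\lambda}}=\{d\ge 1/\lambda\}$, i.e.\ under hypothesis (H2) of Theorem~\ref{t:unique}. When (H2) fails (see Example~\ref{e:nonreg}), the set $\{d\ge 1/\lambda\}\setminus\overline{\Omega_{1/\lambda}}$ is a nonempty set of Lebesgue measure zero, and nothing prevents $u_\infty$ from vanishing there. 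In that situation $\{u_\infty=0\}$ may be strictly larger than $\overline{\Omega_{1/\lambda}}$, so $u_\infty$ need not be infinity harmonic on all of $D_{1/\lambda}$, and the uniqueness argument identifying $u_\infty$ with $w_{1/\lambda}$ breaks down. Since Theorem~\ref{t:convp} is stated for a general (not necessarily convex) domain, without (H1)--(H2), this is a genuine hole: you have not shown $u_\infty$ satisfies the free boundary condition.

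The paper circumvents this as follows. Your own estimates already give $K:=\{u_\infty=0\}\subseteq\{d\ge 1/\lambda\}$ together with $|K|\ge|\{d\ge 1/\lambda\}|$; since $\interior\{d\ge 1/\lambda\}=\{d>1/\lambda\}$ and $|\{d=1/\lambda\}|=0$, this forces $\interior K=\Omega_{1/\lambda}$ directly (your detour through Proposition~\ref{p:reg} is unnecessary and circular). One then checks that $K\in\mathcal{K}_\lambda$ in the sense of Definition~\ref{d:kl} and invokes the characterization Theorem~\ref{t:solPl2} to conclude that $u_\infty=w_K$ solves \ref{f:P}. The identification $u_\infty=w_{1/\lambda}$ is deferred to Corollary~\ref{c:convp}, where (H1)--(H2) are assumed.
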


\begin{proof} 
Thanks to the assumption $\l> 1/\inradius$ and to Proposition \ref{p:convl}, we know that for $p$ large enough problem \eqref{f:minp} admits a solution $u _{\lambda, p}$. 
As $\l$ is fixed, we shall write for brevity $J _p$ in place of $J ^ \l _p$ and $u _p$ in place of $u _{\l, p}$. 

Let us first show that, for every fixed $q>1$, the family $(u _p)$ is uniformly bounded in $W^{1,q}(\Omega)$. 
Since
\[
\frac{1}{p}\int_\Omega\left(\frac{|\nabla u_p|}{\lambda} \right)^p\, dx 
\leq J_p (u_p)
\leq J_p (1) =  \frac{p-1}{p}  |\Omega|
\leq  |\Omega|,
\]
we get 
\[
\|\nabla u _p \|_p \leq
\lambda p^{1/p}  |\Omega|^{1/p}.
\]
Then, by H\"older's inequality, 
for every $p > q+1$ it holds
\begin{equation}\label{f:estigupa}
\|\nabla u _p\|_q \leq \|\nabla u _p \|_p \, |\Omega|^{\frac{1}{q}-\frac{1}{p}}
\leq 
\lambda p^{1/p}  |\Omega|^{\frac{1}{q}}
\leq C,
\end{equation}
where $C>0$ is a constant independent of $p$.

Using a diagonal argument, we can construct an increasing sequence $p_j\to +\infty$
such that $u _{p _j}$ converges weakly in $W ^ {1,q} (\Omega)$ for every $q>1$ and uniformly in $\overline \Omega$
to a function $u_\infty$. 
We claim that $u_\infty$ is a solution to \ref{f:P}, satisfying 
$
\interior\{u_\infty = 0\} = \{d > 1/\lambda\}$.

\smallskip
The fact that $u_\infty$ is infinity harmonic in its positivity set is a standard consequence of the fact that 
$u _ {p_j}$ solve \eqref{f:minp} with $p = p_j \to + \infty$, see for instance the arguments in \cite[proof of Theorem 1]{RossiTeix}. 

Moreover, since $u _{p_j}= 1 $ on $\partial \Omega$ for every $j$, from the uniform convergence it follows immediately that
the same condition is satisfied by $u_\infty$.

Next we are going to show that  the set 
$$K := \{ u_\infty = 0\}$$
satisfies ${\rm int} (K) = \{ d> 1/\l \}$, and that it belongs to the class $\mathcal{K}_\lambda$ introduced in Definition~\ref{d:kl}.

From the  second inequality in \eqref{f:estigupa}, we see that $\|\nabla u _\infty \|_{L^\infty(\Omega)} \leq \lambda$.
 Since $u = 1$ on $\partial\Omega$, we conclude
that $u \in \Vspace(\Omega)$, where
$\Vspace(\Omega)$ is the space defined in Lemma~\ref{l:monot}.
Therefore, 
we deduce as  a first information on $K$ the inclusion
\begin{equation}\label{f:info1}
K \subseteq \{ d \geq 1/\lambda\}\,. 
\end{equation} 
To go farther, 
we claim that $u_\infty$ solves the minimization problem 
\begin{equation}\label{f:mina} 
\min \Big \{  J _\infty (u)  \ :\ 
u\in\Vspace(\Omega)
 \Big \} \,,
\end{equation} 
where $J_\infty$ is the functional defined by \eqref{f:jinf}.
Indeed, 
since $u _{p_j}\to u _\infty$ uniformly in $\overline{\Omega}$, for every fixed $\e>0$, 
there exists an index $j_\varepsilon\in\N$ such that
\[
|\{u_\infty  > 0\}| < |\{u _{p_j} > 0\}| + \varepsilon,
\qquad\forall j > j_\varepsilon\, . 
\]
Then, for $j> j_\varepsilon$, 
it holds
\begin{equation}\label{f:info2} 
\begin{array}{ll}
J _{p_j} (u _\infty)
& \displaystyle \leq \frac{1}{p _j}  | \{ u _\infty  >0 \} |   +  
 \frac{p_j-1}{p_j}  |\{u_\infty  > 0\}|
\\ \noalign{\medskip} & \displaystyle \leq
\frac{1}{p_j}   | \Omega |  + \frac{p_j-1}{p_j} (|\{ u _{p _j}  > 0\}| + \varepsilon)  
\\ \noalign{\medskip}  &  \displaystyle \leq
\frac{1}{p_j}  |\Omega|
+ J _{p _j} (u _{p _j})  + \varepsilon\,.
 \end{array} 
 \end{equation}

Thanks to  the monotonicity property stated in Lemma~\ref{l:monot},
we can now pass to the limit as $j \to + \infty$ in \eqref{f:info2}. 
 By the arbitrariness of $\e >0$, and recalling  that $u _{p _j}$ are solutions to~\eqref{f:minp} (with $p = p _j$), we obtain, for every $u \in \Vspace (\Omega)$, 
 \[
J _\infty (u_\infty ) = \lim_{j\to +\infty}  J _{p _j} (u_\infty ) \leq
\liminf_{j\to+\infty}   J _{p_j} (u _{p _j} )
\leq
  \liminf_{j\to+\infty}  J _{p _j} (u)
= J _\infty (u),
\]
so that $u _\infty$ solves problem \eqref{f:mina} as claimed. 
Consequently, by taking as a competitor the function $(1-\l d )_+$, we deduce that $|\{ u _\infty >0  \}| \leq |D_{\frac{1}{\lambda}}|$, or equivalently 
\begin{equation}\label{f:info3}
|K| \geq |\{ d \geq 1/\lambda\}|\,.
\end{equation}

Since $|\nabla d|=1$ a.e.\ in $\Omega$, every level set
of $d$ has zero Lebesgue measure, so that
$|\{d \geq 1/\lambda\}| = |\{d > 1/\lambda\}|$.
Since $\interior \{d \geq 1/\lambda\} = \{d > 1/\lambda\}$, 
by combining 
conditions~\eqref{f:info1} and~\eqref{f:info3}  we obtain that
$\interior K = \{d > 1/\lambda\}$.
As a consequence, $K$ belongs to the family
$\mathcal{K}_\lambda$ 
introduced in Definition~\ref{d:kl},
so that, by Theorem~\ref{t:solPl2},
$u_\infty$ is a solution to \ref{f:P}. 
\end{proof}

\begin{corollary}\label{c:convp}
Let $\lambda > 1/\inradius$.
Then, under the assumptions (H1)--(H2) of Theorem~\ref{t:unique}
(hence, in particular, when $\Omega$ is convex), 
in the limit as $p \to + \infty$ we have
\[
u_{\l,p}   \rightharpoonup 
w_{\frac{1}{\lambda}}\
\text{weakly in}\ W^{1,q}(\Omega) \quad \forall q>1\, , 
\qquad
u_{\l, p_j}  \to
w_{\frac{1}{\lambda}},\
\text{uniformly in}\ \overline{\Omega}\,,
\]
where 
$w_{\frac{1}{\lambda}}$ is the infinity harmonic potential  
of $\overline {\Omega _{\frac{1}{\lambda}}}$, namely the 
unique genuine solution to the $\infty$-Bernoulli problem \ref{f:P}. 
\end{corollary}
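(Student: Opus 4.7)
The plan is to combine Theorem \ref{t:convp} with the uniqueness Theorem \ref{t:unique}, and then upgrade from subsequential convergence to convergence of the whole family $(u_{\lambda,p})$ by a standard subsequence-of-subsequences argument.

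First, I would note that under assumptions (H1)--(H2) (in particular when $\Omega$ is convex, via the remarks following Theorem \ref{t:unique}), the infinity harmonic potential $w_{\frac{1}{\lambda}}$ is the unique non-trivial solution to \ref{f:P}. Next, fix an arbitrary diverging sequence $p_k \to +\infty$. Applying Theorem \ref{t:convp} to the restriction of $(u_{\lambda,p})$ along this sequence, I extract a further subsequence $(u_{\lambda, p_{k_j}})$ which converges weakly in $W^{1,q}(\Omega)$ for every $q>1$ and uniformly on $\overline{\Omega}$ to some function $u_\infty$ which is a solution to \ref{f:P} satisfying $\interior\{u_\infty=0\}=\{d>1/\lambda\}=\Omega_{\frac{1}{\lambda}}$.

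Since $\lambda > 1/\inradius$, the set $\Omega_{\frac{1}{\lambda}}$ is non-empty, so $u_\infty$ has nonempty interior of its zero set and is therefore a non-trivial solution. By Theorem \ref{t:unique}, which applies thanks to (H1)--(H2), the non-trivial solution to \ref{f:P} is unique, so $u_\infty = w_{\frac{1}{\lambda}}$.

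Since the limit $w_{\frac{1}{\lambda}}$ is independent of the initially chosen diverging sequence $(p_k)$, a standard subsequence-of-subsequences argument yields convergence of the whole family $u_{\lambda,p} \to w_{\frac{1}{\lambda}}$ as $p\to+\infty$, both weakly in $W^{1,q}(\Omega)$ for every $q>1$ and uniformly on $\overline{\Omega}$. There is essentially no obstacle in this argument: all the analytic work has already been done in Theorem \ref{t:convp}, and the only ingredient needed here is the uniqueness ensured by (H1)--(H2).
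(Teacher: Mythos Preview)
Your proposal is correct and follows essentially the same approach as the paper: apply Theorem~\ref{t:convp} to obtain a subsequence converging to a (non-trivial) solution of \ref{f:P}, invoke Theorem~\ref{t:unique} to identify the limit as $w_{1/\lambda}$, and then use the subsequence-of-subsequences argument to upgrade to full convergence. The paper's proof is more terse but the logic is identical.
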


\begin{proof}
From Theorem~\ref{t:convp}, there exists an increasing sequence
$p_j \nearrow \infty$ such that
$u_{\lambda, p_j} \to u_\infty$,
with $u_\infty$ solution to \ref{f:P}.
Hence, by Theorem~\ref{t:unique} we have
that $u_\infty = w_{1/\lambda}$.
By the same argument, any other converging subsequence must converge
to $w_{1/\lambda}$.
\end{proof}

\bigskip 
{\bf Acknowledgments.} 
We thank Bozhidar Velichkov for a useful discussion about the viscosity interpretation of free boundary conditions. 

The authors have been supported by the Gruppo Nazionale per l'Analisi Matematica, 
la Probabilit\`a e le loro Applicazioni (GNAMPA) of the Istituto Nazionale di Alta Matematica (INdAM).

\def\cprime{$'$}
\begin{bibdiv}
\begin{biblist}

\bib{ACNS}{article}{
      author={Albano, {P.}},
      author={Cannarsa, {P.}},
      author={Nguyen, {K.T.}},
      author={Sinestrari, {C.}},
       title={Singular gradient flow of the distance function and homotopy
  equivalence},
        date={2013},
        ISSN={0025-5831},
     journal={Math. Ann.},
      volume={356},
      number={1},
       pages={23\ndash 43},
         url={https://doi.org/10.1007/s00208-012-0835-8},
      review={\MR{3038120}},
}

\bib{AlCa}{article}{
      author={Alt, {H. W.}},
      author={Caffarelli, {L. A.}},
       title={Existence and regularity for a minimum problem with free
  boundary},
        date={1981},
     journal={J. Reine Angew. Math.},
      volume={325},
       pages={105\ndash 144},
}

\bib{Aro}{article}{
      author={Aronsson, {G.}},
       title={Extension of functions satisfying {L}ipschitz conditions},
        date={1967},
     journal={Ark. Mat.},
      volume={6},
       pages={551\ndash 561 (1967)},
}

\bib{ArCrJu}{article}{
      author={Aronsson, {G.}},
      author={Crandall, {M.G.}},
      author={Juutinen, {P.}},
       title={A tour of the theory of absolutely minimizing functions},
        date={2004},
     journal={Bull. Amer. Math. Soc. (N.S.)},
      volume={41},
      number={4},
       pages={439\ndash 505},
}

\bib{BaJeWa}{article}{
      author={Barron, E.~N.},
      author={Jensen, R.~R.},
      author={Wang, C.~Y.},
       title={The {E}uler equation and absolute minimizers of {$L^\infty$}
  functionals},
        date={2001},
     journal={Arch. Ration. Mech. Anal.},
      volume={157},
      number={4},
       pages={255\ndash 283},
}

\bib{Bh2002}{article}{
      author={Bhattacharya, {T.}},
       title={On the properties of {$\infty$}-harmonic functions and an
  application to capacitary convex rings},
        date={2002},
        ISSN={1072-6691},
     journal={Electron. J. Differential Equations},
       pages={No. 101, 22},
      review={\MR{1938397}},
}

\bib{BDM}{article}{
      author={Bhattacharya, T.},
      author={DiBenedetto, E.},
      author={Manfredi, J.},
       title={Limits as {$p\to\infty$} of {$\Delta\sb pu\sb p=f$} and related
  extremal problems},
        date={1989},
        ISSN={0373-1243},
     journal={Rend. Sem. Mat. Univ. Politec. Torino},
      number={Special Issue},
       pages={15\ndash 68 (1991)},
        note={Some topics in nonlinear PDEs (Turin, 1989)},
}

\bib{BhMo}{article}{
      author={Bhattacharya, {T.}},
      author={Mohammed, {A.}},
       title={Inhomogeneous {D}irichlet problems involving the
  infinity-{L}aplacian},
        date={2012},
     journal={Adv. Differential Equations},
      volume={17},
      number={3-4},
       pages={225\ndash 266},
}

\bib{Brai}{book}{
      author={Braides, {A.}},
       title={{$\Gamma$}-convergence for beginners},
   publisher={Oxford University Press},
     address={New York},
        date={2002},
}

\bib{CJK04}{incollection}{
      author={Caffarelli, {L. A.}},
      author={Jerison, {D.}},
      author={Kenig, {C.E.}},
       title={Global energy minimizers for free boundary problems and full
  regularity in three dimensions},
        date={2004},
   booktitle={Noncompact problems at the intersection of geometry, analysis,
  and topology},
      series={Contemp. Math.},
      volume={350},
   publisher={Amer. Math. Soc., Providence, RI},
       pages={83\ndash 97},
}

\bib{Caff-H1}{article}{
      author={Caffarelli, {L.A.}},
       title={A {H}arnack inequality approach to the regularity of free
  boundaries. {I}. {L}ipschitz free boundaries are {$C^{1,\alpha}$}},
        date={1987},
        ISSN={0213-2230},
     journal={Rev. Mat. Iberoamericana},
      volume={3},
      number={2},
       pages={139\ndash 162},
         url={https://doi.org/10.4171/RMI/47},
      review={\MR{990856}},
}

\bib{Caff-H3}{article}{
      author={Caffarelli, {L.A.}},
       title={A {H}arnack inequality approach to the regularity of free
  boundaries. {III}.\ {E}xistence theory, compactness, and dependence on
  {$X$}},
        date={1988},
        ISSN={0391-173X},
     journal={Ann. Scuola Norm. Sup. Pisa Cl. Sci. (4)},
      volume={15},
      number={4},
       pages={583\ndash 602 (1989)},
         url={http://www.numdam.org/item?id=ASNSP_1988_4_15_4_583_0},
      review={\MR{1029856}},
}

\bib{Caff-H2}{article}{
      author={Caffarelli, {L.A.}},
       title={A {H}arnack inequality approach to the regularity of free
  boundaries. {II}. {F}lat free boundaries are {L}ipschitz},
        date={1989},
        ISSN={0010-3640},
     journal={Comm. Pure Appl. Math.},
      volume={42},
      number={1},
       pages={55\ndash 78},
         url={https://doi.org/10.1002/cpa.3160420105},
      review={\MR{973745}},
}

\bib{CaffSalsa}{book}{
      author={Caffarelli, {L.A.}},
      author={Salsa, {S.}},
       title={A geometric approach to free boundary problems},
      series={Graduate Studies in Mathematics},
   publisher={American Mathematical Society, Providence, RI},
        date={2005},
      volume={68},
        ISBN={0-8218-3784-2},
         url={https://doi.org/10.1090/gsm/068},
      review={\MR{2145284}},
}

\bib{CT02}{article}{
      author={Cardaliaguet, {P.}},
      author={Tahraoui, {R.}},
       title={Some uniqueness results for {B}ernoulli interior free-boundary
  problems in convex domains},
        date={2002},
     journal={Electron. J. Differential Equations},
       pages={No. 102, 16},
}

\bib{Cran}{incollection}{
      author={Crandall, {M.G.}},
       title={A visit with the {$\infty$}-{L}aplace equation},
        date={2008},
   booktitle={Calculus of variations and nonlinear partial differential
  equations},
      series={Lecture Notes in Math.},
      volume={1927},
   publisher={Springer},
     address={Berlin},
       pages={75\ndash 122},
}

\bib{CEG}{article}{
      author={Crandall, {M.G.}},
      author={Evans, {L.C.}},
      author={Gariepy, {R.F.}},
       title={Optimal lipschitz extensions and the infinity {L}aplacian},
        date={2001},
     journal={Calc. Var. Partial Differential Equations},
      volume={13},
      number={2},
       pages={123\ndash 139},
}

\bib{CFd}{article}{
      author={Crasta, {G.}},
      author={Fragal{\`a}, {I.}},
       title={On the {D}irichlet and {S}errin problems for the inhomogeneous
  infinity {L}aplacian in convex domains: regularity and geometric results},
        date={2015},
        ISSN={0003-9527},
     journal={Arch. Ration. Mech. Anal.},
      volume={218},
      number={3},
       pages={1577\ndash 1607},
         url={http://dx.doi.org/10.1007/s00205-015-0888-4},
      review={\MR{3401015}},
}

\bib{CFc}{article}{
      author={Crasta, {G.}},
      author={Fragal{\`a}, {I.}},
       title={A symmetry problem for the infinity {L}aplacian},
        date={2015},
        ISSN={1073-7928},
     journal={Int. Math. Res. Not. IMRN},
      number={18},
       pages={8411\ndash 8436},
         url={http://dx.doi.org/10.1093/imrn/rnu204},
      review={\MR{3417681}},
}

\bib{CFe}{article}{
      author={Crasta, {G.}},
      author={Fragal{\`a}, {I.}},
       title={A {$C^1$} regularity result for the inhomogeneous normalized
  infinity {L}aplacian},
        date={2016},
        ISSN={0002-9939},
     journal={Proc. Amer. Math. Soc.},
      volume={144},
      number={6},
       pages={2547\ndash 2558},
         url={http://dx.doi.org/10.1090/proc/12916},
      review={\MR{3477071}},
}

\bib{CFf}{article}{
      author={Crasta, {G.}},
      author={Fragal{\`a}, {I.}},
       title={Characterization of stadium-like domains via boundary value
  problems for the infinity {L}aplacian},
        date={2016},
        ISSN={0362-546X},
     journal={Nonlinear Anal.},
      volume={133},
       pages={228\ndash 249},
         url={http://dx.doi.org/10.1016/j.na.2015.12.007},
      review={\MR{3449756}},
}

\bib{CF9}{misc}{
      author={Crasta, {G.}},
      author={Fragal{\`a}, {I.}},
       title={On the supremal version of the {A}lt-{C}affarelli minimization
  problem},
        date={2018},
        note={arXiv: 1811.12810},
}

\bib{CF7}{article}{
      author={Crasta, {G.}},
      author={Fragal{\`a}, {I.}},
       title={Rigidity results for variational infinity ground states},
        date={2019},
        ISSN={0022-2518},
     journal={Indiana Univ. Math. J.},
      volume={68},
       pages={353\ndash 367},
}

\bib{CMf}{article}{
      author={Crasta, {G.}},
      author={Malusa, {A.}},
       title={The distance function from the boundary in a {M}inkowski space},
        date={2007},
     journal={Trans.\ Amer.\ Math.\ Soc.},
      volume={359},
       pages={5725\ndash 5759},
}

\bib{DaKa2010}{article}{
      author={Daners, {D.}},
      author={Kawohl, {B.}},
       title={An isoperimetric inequality related to a {B}ernoulli problem},
        date={2010},
        ISSN={0944-2669},
     journal={Calc. Var. Partial Differential Equations},
      volume={39},
      number={3-4},
       pages={547\ndash 555},
         url={https://doi.org/10.1007/s00526-010-0324-4},
      review={\MR{2729312}},
}

\bib{DanPet}{article}{
      author={Danielli, {D.}},
      author={Petrosyan, {A.}},
       title={A minimum problem with free boundary for a degenerate quasilinear
  operator},
        date={2005},
        ISSN={0944-2669},
     journal={Calc. Var. Partial Differential Equations},
      volume={23},
      number={1},
       pages={97\ndash 124},
         url={https://doi.org/10.1007/s00526-004-0294-5},
      review={\MR{2133664}},
}

\bib{DanPet2}{article}{
      author={Danielli, {D.}},
      author={Petrosyan, {A.}},
       title={Full regularity of the free boundary in a {B}ernoulli-type
  problem in two dimensions},
        date={2006},
     journal={Math. Res. Lett.},
      volume={13},
      number={4},
       pages={667\ndash 681},
}

\bib{DeSilva2011}{article}{
      author={De~Silva, D.},
       title={Free boundary regularity for a problem with right hand side},
        date={2011},
        ISSN={1463-9963},
     journal={Interfaces Free Bound.},
      volume={13},
      number={2},
       pages={223\ndash 238},
         url={https://doi.org/10.4171/IFB/255},
      review={\MR{2813524}},
}

\bib{DFS17}{article}{
      author={De~Silva, {D.}},
      author={Ferrari, {F.}},
      author={Salsa, {S.}},
       title={Two-phase free boundary problems: from existence to smoothness},
        date={2017},
     journal={Adv. Nonlinear Stud.},
      volume={17},
      number={2},
       pages={369\ndash 385},
}

\bib{DSJ09}{article}{
      author={De~Silva, {D.}},
      author={Jerison, {D.}},
       title={A singular energy minimizing free boundary},
        date={2009},
     journal={J. Reine Angew. Math.},
      volume={635},
}

\bib{EvSav}{article}{
      author={Evans, {L.C.}},
      author={Savin, {O.}},
       title={{$C^{1,\alpha}$} regularity for infinity harmonic functions in
  two dimensions},
        date={2008},
     journal={Calc. Var. Partial Differential Equations},
      volume={32},
       pages={325\ndash 347},
}

\bib{EvSm}{article}{
      author={Evans, {L.C.}},
      author={Smart, {C.K.}},
       title={Everywhere differentiability of infinity harmonic functions},
        date={2011},
     journal={Calc. Var. Partial Differential Equations},
      volume={42},
       pages={289\ndash 299},
}

\bib{FS15}{article}{
      author={Figalli, {A.}},
      author={Shahgholian, {H.}},
       title={An overview of unconstrained free boundary problems},
        date={2015},
     journal={Philos. Trans. Roy. Soc. A},
      volume={373},
      number={2050},
}

\bib{FR97}{article}{
      author={Flucher, M.},
      author={Rumpf, M.},
       title={Bernoulli's free-boundary problem, qualitative theory and
  numerical approximation},
        date={1997},
     journal={J. Reine Angew. Math.},
      volume={486},
       pages={165\ndash 204},
}

\bib{HeSh2}{article}{
      author={Henrot, {A.}},
      author={Shahgholian, {H.}},
       title={Existence of classical solutions to a free boundary problem for
  the {$p$}-{L}aplace operator. {II}. {T}he interior convex case},
        date={2000},
        ISSN={0022-2518},
     journal={Indiana Univ. Math. J.},
      volume={49},
      number={1},
       pages={311\ndash 323},
         url={https://doi.org/10.1512/iumj.2000.49.1711},
      review={\MR{1777029}},
}

\bib{HS}{book}{
      author={Hewitt, {E.}},
      author={Stromberg, {K.}},
       title={Real and abstract analysis},
   publisher={Springer-Verlag},
     address={Berlin},
        date={1969},
}

\bib{Hong}{article}{
      author={Hong, {G.}},
       title={Boundary differentiability for inhomogeneous infinity {L}aplace
  equations},
        date={2014},
     journal={Electron. J. Differential Equations},
       pages={No. 72, 6},
}

\bib{Hong2}{article}{
      author={Hong, {G.}},
       title={Counterexample to {$C^1$} boundary regularity of infinity
  harmonic functions},
        date={2014},
     journal={Nonlinear Anal.},
      volume={104},
       pages={120\ndash 123},
}

\bib{HSY}{article}{
      author={Hynd, {R.}},
      author={Smart, {C.K.}},
      author={Yu, {Y.}},
       title={Nonuniqueness of infinity ground states},
        date={2013},
     journal={Calc. Var. Partial Differential Equations},
      volume={48},
      number={3-4},
       pages={545\ndash 554},
}

\bib{Jen}{article}{
      author={Jensen, {R.}},
       title={Uniqueness of {L}ipschitz extensions: minimizing the sup norm of
  the gradient},
        date={1993},
        ISSN={0003-9527},
     journal={Arch.\ Rational Mech.\ Anal.},
      volume={123},
       pages={51\ndash 74},
}

\bib{JLM}{article}{
      author={Juutinen, {P.}},
      author={Lindqvist, {P.}},
      author={Manfredi, {J.J.}},
       title={The {$\infty$}-eigenvalue problem},
        date={1999},
     journal={Arch. Ration. Mech. Anal.},
      volume={148},
      number={2},
       pages={89\ndash 105},
}

\bib{JuuLinMan}{incollection}{
      author={Juutinen, {P.}},
      author={Lindqvist, {P.}},
      author={Manfredi, {J.J.}},
       title={The infinity {L}aplacian: examples and observations},
        date={2001},
   booktitle={Papers on analysis},
      series={Rep. Univ. Jyv\"askyl\"a Dep. Math. Stat.},
      volume={83},
   publisher={Univ. Jyv\"askyl\"a, Jyv\"askyl\"a},
       pages={207\ndash 217},
      review={\MR{1886623}},
}

\bib{KawSha}{article}{
      author={Kawohl, {B.}},
      author={Shahgholian, {H.}},
       title={Gamma limits in some {B}ernoulli free boundary problem},
        date={2005},
        ISSN={0003-889X},
     journal={Arch. Math. (Basel)},
      volume={84},
      number={1},
       pages={79\ndash 87},
         url={https://doi.org/10.1007/s00013-004-1334-2},
      review={\MR{2106407}},
}

\bib{KoSe}{article}{
      author={Kohn, {R. V.}},
      author={Serfaty, {S.}},
       title={A deterministic-control-based approach to motion by curvature},
        date={2006},
     journal={Comm. Pure Appl. Math.},
      volume={59},
      number={3},
       pages={344\ndash 407},
}

\bib{LR18}{article}{
      author={Leit\~{a}o, {R.}},
      author={Ricarte, {G.}},
       title={Free boundary regularity for a degenerate problem with right hand
  side},
        date={2018},
     journal={Interfaces Free Bound.},
      volume={20},
      number={4},
       pages={577\ndash 595},
}

\bib{LT15}{article}{
      author={Leit\~{a}o, {R.}},
      author={Teixeira, {E.V.}},
       title={Regularity and geometric estimates for minima of discontinuous
  functionals},
        date={2015},
     journal={Rev. Mat. Iberoam.},
      volume={31},
      number={1},
       pages={69\ndash 108},
}

\bib{Lind}{article}{
      author={Lindgren, {E.}},
       title={On the regularity of solutions of the inhomogeneous infinity
  {L}aplace equation},
        date={2014},
        ISSN={0002-9939},
     journal={Proc. Amer. Math. Soc.},
      volume={142},
      number={1},
       pages={277\ndash 288},
         url={http://dx.doi.org/10.1090/S0002-9939-2013-12180-5},
      review={\MR{3119202}},
}

\bib{LuWang}{article}{
      author={Lu, {G.}},
      author={Wang, {P.}},
       title={Inhomogeneous infinity {L}aplace equation},
        date={2008},
     journal={Adv. Math.},
      volume={217},
       pages={1838\ndash 1868},
}

\bib{ManPetSha}{article}{
      author={Manfredi, {J.}},
      author={Petrosyan, {A.}},
      author={Shahgholian, {H.}},
       title={A free boundary problem for {$\infty$}-{L}aplace equation},
        date={2002},
        ISSN={0944-2669},
     journal={Calc. Var. Partial Differential Equations},
      volume={14},
      number={3},
       pages={359\ndash 384},
         url={https://doi.org/10.1007/s005260100107},
      review={\MR{1899452}},
}

\bib{PSSW}{article}{
      author={Peres, {Y.}},
      author={Schramm, {O.}},
      author={Sheffield, {S.}},
      author={Wilson, {D. B.}},
       title={Tug-of-war and the infinity {L}aplacian},
        date={2009},
     journal={J. Amer. Math. Soc.},
      volume={22},
      number={1},
       pages={167\ndash 210},
}

\bib{P08}{article}{
      author={Petrosyan, {A.}},
       title={On the full regularity of the free boundary in a class of
  variational problems},
        date={2008},
     journal={Proc. Amer. Math. Soc.},
      volume={136},
      number={8},
       pages={2763\ndash 2769},
}

\bib{reichel}{incollection}{
      author={Reichel, W.},
       title={Radial symmetry by moving planes for semilinear elliptic {BVP}s
  on annuli and other non-convex domains},
        date={1995},
   booktitle={Elliptic and parabolic problems ({P}ont-\`a-{M}ousson, 1994)},
      series={Pitman Res. Notes Math. Ser.},
      volume={325},
   publisher={Longman Sci. Tech., Harlow},
       pages={164\ndash 182},
}

\bib{RUT}{article}{
      author={Rossi, {J. D.}},
      author={Teixeira, {E. V.}},
      author={Urbano, {J. M.}},
       title={Optimal regularity at the free boundary for the infinity obstacle
  problem},
        date={2015},
     journal={Interfaces Free Bound.},
      volume={17},
      number={3},
       pages={381\ndash 398},
}

\bib{RossiTeix}{article}{
      author={Rossi, {J.D.}},
      author={Teixeira, {E.V.}},
       title={A limiting free boundary problem ruled by {A}ronsson's equation},
        date={2012},
        ISSN={0002-9947},
     journal={Trans. Amer. Math. Soc.},
      volume={364},
      number={2},
       pages={703\ndash 719},
         url={https://doi.org/10.1090/S0002-9947-2011-05322-5},
      review={\MR{2846349}},
}

\bib{RossiWang}{article}{
      author={Rossi, {J.D.}},
      author={Wang, {P.}},
       title={The limit as {$p\to\infty$} in a two-phase free boundary problem
  for the {$p$}-{L}aplacian},
        date={2016},
     journal={Interfaces Free Bound.},
      volume={18},
      number={1},
       pages={115\ndash 135},
}

\bib{Sav}{article}{
      author={Savin, O.},
       title={{$C^1$} regularity for infinity harmonic functions in two
  dimensions},
        date={2005},
        ISSN={0003-9527},
     journal={Arch. Ration. Mech. Anal.},
      volume={176},
      number={3},
       pages={351\ndash 361},
         url={http://dx.doi.org/10.1007/s00205-005-0355-8},
      review={\MR{2185662 (2006i:35108)}},
}

\bib{SWZ}{misc}{
      author={Siljander, {J.}},
      author={Wang, {C.}},
      author={Zhou, {Y.}},
       title={Everywhere differentiability of viscosity solutions to a class of
  {A}ronsson's equations},
        date={2014},
        note={preprint arXiv:1409.6804},
}

\bib{TU17}{article}{
      author={Teymurazyan, {R.}},
      author={Urbano, {J.M.}},
       title={A free boundary optimization problem for the
  {$\infty$}-{L}aplacian},
        date={2017},
     journal={J. Differential Equations},
      volume={263},
      number={2},
       pages={1140\ndash 1159},
}

\bib{Yu}{article}{
      author={Yu, {Y.}},
       title={Some properties of the ground states of the infinity
  {L}aplacian},
        date={2007},
     journal={Indiana Univ. Math. J.},
      volume={56},
       pages={947\ndash 964},
}

\end{biblist}
\end{bibdiv}

\end{document}